\newtheorem{thm}{Theorem}[section]
\newtheorem*{thm*}{Theorem}
\newtheorem{cor}[thm]{Corollary}
\newtheorem{lem}[thm]{Lemma}
\newtheorem{prop}[thm]{Proposition}
\theoremstyle{definition}
\newtheorem{dfn}[thm]{Definition}
\newtheorem*{dfn*}{Definition}
\newtheorem{rem}[thm]{Remark}
\newtheorem{ques}[thm]{Question}
\newtheorem{ex}[thm]{Example}
\newtheorem*{nota*}{Notation}
\theoremstyle{remark}
\newtheorem*{ac}{Acknowledgments}
\newtheorem{claim}{Claim}
\newtheorem*{claim*}{Claim}
\renewcommand{\qedsymbol}{$\blacksquare$}
\numberwithin{equation}{thm}
\def\epi{\twoheadrightarrow}
\def\mon{\rightarrowtail}
\def\Hom{\mathsf{Hom}}
\def\Ext{\mathsf{Ext}}
\def\syz{\mathsf{\Omega}}
\def\cm{\mathsf{CM}}
\def\lcm{\mathsf{\underline{CM}}}
\def\db{\mathsf{D^b}}
\def\cb{\mathsf{C^b}}
\def\kb{\mathsf{K^b}}
\def\proj{\operatorname{\mathsf{proj}}}
\def\inj{\operatorname{\mathsf{inj}}}
\def\mod{\operatorname{\mathsf{mod}}}
\def\add{\operatorname{\mathsf{add}}}
\def\det{\operatorname{\mathsf{det}}}
\def\rk{\operatorname{\mathsf{rk}}}
\def\K{\mathit{K}}
\def\A{\mathcal{A}}
\def\X{\mathcal{X}}
\def\C{\mathcal{C}}
\def\T{\mathcal{T}}
\def\G{\mathcal{G}}
\def\Z{\mathbb{Z}}
\def\a{\mathsf{A}}
\def\d{\mathsf{D}}
\def\e{\mathsf{E}}
\def\H{\mathsf{H}}
\def\dim{\operatorname{\mathsf{dim}}}
\def\Ext{\operatorname{\mathsf{Ext}}}
\def\syz{\mathsf{\Omega}}
\def\dim{\operatorname{\mathsf{dim}}}
\def\lcm{\operatorname{\mathsf{\underline{MCM}}}}
\def\Hom{\operatorname{\mathsf{Hom}}}
\def\X{\mathcal{X}}
\def\A{\mathcal{A}}
\def\H{\mathsf{H}}
\def\Ext{\mathsf{Ext}}
\def\rad{\mathsf{rad}}
\def\Hom{\mathsf{Hom}}
\def\mod{\mathsf{mod}}
\def\syz{\mathsf{\Omega}}
\def\dim{\operatorname{\mathsf{dim}}}
\def\add{\operatorname{\mathsf{add}}}
\def\ker{\operatorname{\mathsf{Ker}}}
\def\cok{\operatorname{\mathsf{Coker}}}
\def\a{\mathsf{A}}
\def\d{\mathsf{D}}
\def\e{\mathsf{E}}
\def\ker{\operatorname{\mathsf{Ker}}}
\def\A{\mathcal{A}}
\def\C{\mathcal{C}}
\def\E{\mathcal{E}}
\def\G{\mathcal{G}}
\def\T{\mathcal{T}}
\def\X{\mathcal{X}}
\def\bZ{\mathbb{Z}}
\def\add{\operatorname{\mathsf{add}}}
\def\mod{\operatorname{\mathsf{mod}}}
\def\proj{\operatorname{\mathsf{proj}}}
\def\kb{\mathsf{K^b}}
\def\db{\mathsf{D^b}}
\def\Ds{\mathsf{D_{sg}}}
\def\cm{\mathsf{CM}}
\def\lcm{\underline{\mathsf{CM}}}
\def\Ext{\mathsf{Ext}}
\def\Hom{\mathsf{Hom}}
\def\syz{\mathsf{\Omega}}
\def\H{\mathsf{H}}
\def\cl{\mathsf{Cl}}
\def\lan{\langle}
\def\ran{\rangle}
\title[Classifying dense (co)resolving subcategories]{Classifying dense (co)resolving subcategories of exact categories via Grothendieck groups}
\author{Hiroki Matsui} 
\address{Graduate School of Mathematics, Nagoya University, Furocho, Chikusaku, Nagoya, Aichi 464-8602, Japan}
\email{m14037f@math.nagoya-u.ac.jp}
\date{\today}
\thanks{2010 {\em Mathematics Subject Classification.} 18E10, 18F30, 16G50}
\thanks{{\em Key words and phrases.} exact category, dense subcategory, (co)resolving subcategory, Grothendieck group}
\thanks{The author is supported by Grant-in-Aid for JSPS Fellows 16J01067.
}
\begin{document}

\begin{abstract}
Classification problems of subcategories have been deeply considered so far.
In this paper, we discuss classifying dense (co)resolving subcategories of exact categories via their Grothendieck groups.
This study is motivated by the classification of dense triangulated subcategories of triangulated categories due to Thomason.
\end{abstract}

\maketitle

\section{Introduction}
Let $\C$ be a category. Classifying subcategories means for a property $\mathbb{P}$, finding a one-to-one correspondence
%\begin{gather*}
%\{ \mbox{subcategories of } \C \mbox{ satisfying } \mathbb{P} \} \\
%f \downarrow \uparrow g \\
%S 
%\end{gather*}
$$
\xymatrix{
\{ \mbox{subcategories of } \C \mbox{ satisgying } \mathbb{P}\}  \ar@<0.5ex>[r]^-f  &
S,
\ar@<0.5ex>[l]^-g
}
$$
where the set $S$ is easier to understand. %The important point of this bijection is 
Classifying subcategories is an important approach to understand the category $\C$ and has been studied in various areas of mathematics, for example,  stable homotopy theory, commutative/noncommutative ring theory, algebraic geometry, and modular representation theory.
%Classifying subcategories is a approach to understand the
%where $A$ is a set which is easier to understand and (...) = thick, resolving, Serre, and so on.

Let $\A$ be an additive category and $\X$ a full additive subcategory of $\A$.
We say that $\X$ is {\it additively closed} if it is closed under taking direct summands, and that $\X$ is {\it dense} if any object in $\A$ is a direct summand of some object of $\X$.
We can easily show that $\X$ is additively closed if and only if $\X = \add\X$ and that $\X$ is dense if and only if $\A=\add\X$. Here, $\add\X$ denotes the smallest full additive subcategory of $\A$ which is closed under taking direct summands and contains $\X$.
Therefore, for any full additive subcategory $\X$ of $\A$, $\X$ is a dense subcategory of $\add\X$ and $\add\X$ is an additively closed subcategory of $\A$.
%Hence, classification problems of additive subcategories are reduced to the problems classifying additively closed subcategories and classifying dense subcategories.
For this reason, to classify additive subcategories, it suffices to classify additively closed ones and dense ones.
Classification of additively closed or dense subcategories has been deeply studied so far. For example, the following three kinds of subcategories have been classified by Gabriel \cite{Gab}, Hopkins and Neeman \cite{Hop, Nee}, and Thomason \cite{Thom}, respectively.
\begin{enumerate}
\item The Serre subcategories of finitely generated modules over a commutative noetherian ring.
\item The thick subcategories of perfect complexes over a commutative noetherian ring.
\item The dense triangulated subcategories of an essentially small triangulated category.
\end{enumerate}
(1) and (2) are classifications of additively closed subcategories, while (3) is a classification of dense subcategories.

%Our  motivated by Thomason's classification.
Let us state the precise statement of Thomason's classification theorem.
%\begin{thm}[Thomason]\label{Thomason}
%Let $\T$ be an essentially small triangulated category. Then there is a bijection
%$$
%\{\mbox{dense triangulated subcategories of }  \T \} \overset{f}{\underset{g}{\rightleftarrows}} 
%\{ \mbox{subgroups of }\K_0(\E)\},
%$$
%where $f$ and $g$ are given by $f(\X):=\langle [X] \mid X \in \X \rangle$ and $g(H):=\{X \in \T \mid [X] \in H\}$, respectively.
%\end{thm}
\begin{thm}[Thomason]\label{Thomason}
Let $\T$ be an essentially small triangulated category. Then there is a one-to-one correspondence
%\begin{gather*}
%\{ \mbox{dense triangulated subcategories of } \T \} \\
%f \downarrow \uparrow g \\
%\{ \mbox{subgroups of } \K_0(\T) \}, \\
%\end{gather*}
$$
\xymatrix{
\{ \mbox{dense triangulated subcategories of } \T\}  \ar@<0.5ex>[r]^-f 
& \{\mbox{subgroups of } \K_0(\T)\}, \ar@<0.5ex>[l]^-g
}
$$
where $f$ and $g$ are given by $f(\X):=\langle [X] \mid X \in \X \rangle$ and $g(H):=\{X \in \T \mid [X] \in H\}$, respectively, and $\K_0(\T)$ stands for the Grothendieck group of $\T$.
\end{thm}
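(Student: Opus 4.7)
The plan is to verify that $f$ and $g$ are well-defined and mutually inverse bijections. The crucial ingredient will be Thomason's \emph{stable isomorphism lemma}: in an essentially small triangulated category $\T$, one has $[X]=[Y]$ in $\K_0(\T)$ if and only if there exists $Z\in\T$ with $X\oplus Z\cong Y\oplus Z$. I would cite this result rather than reprove it, since its proof is the classical octahedral-axiom argument.

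First I check well-definedness. That $f(\X)$ is a subgroup is immediate from the definition. For $g(H)$, closure under shifts follows from $[X[1]]=-[X]$, and closure under cones of morphisms is exactly the defining relation of $\K_0$; hence $g(H)$ is a triangulated subcategory. To see that $g(H)$ is dense, note that $[X\oplus X[1]]=[X]+[X[1]]=0\in H$ for every $X\in\T$, so every object is a direct summand of some object of $g(H)$.

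Next I verify that $f\circ g=\mathrm{id}$. The inclusion $f(g(H))\subseteq H$ is tautological. For the reverse inclusion, the key observation is that every element of $\K_0(\T)$ is represented by a single class $[Y]$: using $-[X]=[X[1]]$ and $n[X]=[X^{\oplus n}]$, any $\mathbb{Z}$-linear combination $\sum_i a_i[X_i]$ rewrites as $[Y]$ for $Y$ a direct sum of shifts and copies of the $X_i$. Hence any $h\in H$ equals $[Y]$ with $Y\in g(H)$, so $h\in f(g(H))$.

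The main step is $g\circ f=\mathrm{id}$, where only the inclusion $g(f(\X))\subseteq\X$ requires argument. Given $X\in\T$ with $[X]\in f(\X)$, the representation trick above, applied to the subgroup $f(\X)$, lets me write $[X]=[Y]$ for some $Y\in\X$. Thomason's lemma then produces $Z\in\T$ with $X\oplus Z\cong Y\oplus Z$, and density of $\X$ furnishes $Z'\in\T$ with $W:=Z\oplus Z'\in\X$. Then $X\oplus W\cong Y\oplus W$, and the right-hand side lies in $\X$ since triangulated subcategories are closed under finite direct sums (direct sums are split extensions). Finally, the split triangle $W\to X\oplus W\to X\to W[1]$ exhibits $X$ as the cone of a morphism between two objects of $\X$, so $X\in\X$. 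The only nonformal obstacle in this whole scheme is the stable isomorphism lemma; everything else is bookkeeping with triangles and $\K_0$-relations.
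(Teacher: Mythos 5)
Your overall architecture is sound, and everything you call bookkeeping really is: the well-definedness of $f$ and $g$, the density of $g(H)$ via $[X\oplus X[1]]=0$, the identity $f\circ g=\mathrm{id}$ through the observation that every element of $\K_0(\T)$ (and every element of $f(\X)$, by an object of $\X$) is realized by a single class $[Y]$, and the reduction of $g(f(\X))\subseteq\X$ to showing that $[X]=[Y]$ with $Y\in\X$ forces $X\in\X$. (For what it is worth, the paper does not prove this theorem at all; it quotes it from Thomason, so there is no internal proof to compare against.)

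The problem is that the one ingredient you identify as the sole nonformal obstacle is false as stated. It is not true in a triangulated category that $[X]=[Y]$ in $\K_0(\T)$ implies $X\oplus Z\cong Y\oplus Z$ for some $Z$. Take $\T=\db(\mod k)$ for a field $k$, $X=k\oplus k[1]$, $Y=0$: then $[X]=[k]-[k]=0=[Y]$, but $X\oplus Z\cong Z$ is impossible because the total dimension of cohomology of $X\oplus Z$ exceeds that of $Z$ by $2$. The stable-isomorphism characterization is the one valid for the \emph{split} Grothendieck group of an additive category; the triangulated $\K_0$ is a proper quotient of it, and this quotient is exactly where the content of the theorem lives. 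Thomason's actual lemma (Lemma 2.4 of \cite{Thom}) is weaker: $[X]=[Y]$ if and only if there exist $C,D,D'\in\T$ and exact triangles $C\to X\oplus D\to D'\to C[1]$ and $C\to Y\oplus D\to D'\to C[1]$. With this corrected input your endgame needs one extra move: by density choose $C_1$ and $E_1$ with $C\oplus C_1\in\X$ and $D'\oplus E_1\in\X$, and direct-sum both triangles with the contractible triangles $C_1\to C_1\to 0$ and $0\to E_1\to E_1$. Two-out-of-three applied to the modified $Y$-triangle puts $Y\oplus D\oplus C_1\oplus E_1$ in $\X$, hence $D\oplus C_1\oplus E_1\in\X$ after splitting off $Y$; applied to the modified $X$-triangle it puts $X\oplus D\oplus C_1\oplus E_1$ in $\X$, whence $X\in\X$ by your split-triangle argument. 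Alternatively, you can avoid citing any lemma by imitating the paper's proof of Lemma \ref{lem1}: form the group of isomorphism classes of $\T$ modulo $A\sim A'$ iff $A\oplus X\cong A'\oplus X'$ for some $X,X'\in\X$, and check directly that $\K_0(\T)$ maps onto it.
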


Motivated by this theorem, we discuss classifying dense (co)resolving subcategories of exact categories.
The notion of a resolving subcategory has been introduced by Auslander and Bridger \cite{AB} and that of a coresolving subcategory is its dual notion. 
(Co)resolving subcategories have been widely studied so far, for example, see \cite{APST, AR2, crspd, KS, arg, stcm}.
The main theorem of this paper is the following.
\begin{thm}[Theorem \ref{6main}]
Let $\E$ be an essentially small exact category with a (co)generator $\G$.
Then there is a one-to-one correspondence
%$$
%\left\{ 
%\begin{matrix}
%\text{dense $\G$-(co)resolving subcategories of $\E$}
%\end{matrix}
%\right\}
%\overset{f}{\underset{g}{\rightleftarrows}}  
%\left\{ 
%\begin{matrix}
%\text{subgroups of $\K_0(\E)$} \\
%\text{containing the image of $\G$} 
%\end{matrix}
%\right\},
%$$

$$
\xymatrix{
\left\{ 
\begin{matrix}
\text{dense $\G$-(co)resolving subcategories of $\E$}
\end{matrix}
\right\}  
\ar@<0.5ex>[r]^-f &
\ar@<0.5ex>[l]^-g
}
\!\!\!
\left\{ 
\begin{matrix}
\text{subgroups of $\K_0(\E)$} \\
\text{containing the image of $\G$} 
\end{matrix}
\right\},
$$
where $f$ and $g$ are given by $f(\X):=\langle [X] \mid X \in \X \rangle$ and $g(H):=\{X \in \E \mid [X] \in H\}$, respectively, and $\K_0(\E)$ stands for the Grothendieck group of $\E$.
\end{thm}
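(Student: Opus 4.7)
The plan is to show that $f$ and $g$ are mutually inverse well-defined maps, following Thomason's proof of Theorem \ref{Thomason} with admissible short exact sequences in place of distinguished triangles; I treat the generator/resolving case throughout, the cogenerator/coresolving case being dual.

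\textbf{Step 1: well-definedness and the easy direction $f \circ g = \mathrm{id}$.} Since $\G \subseteq \X$, the subgroup $f(\X)$ contains the image of $\G$. For $g$, given $H \supseteq \langle [G] \mid G \in \G\rangle$, the subcategory $g(H)$ contains $\G$ and is closed under finite direct sums, extensions, and kernels of admissible epimorphisms because $[Y] = [X] + [Z]$ in $\K_0(\E)$ whenever $0 \to X \to Y \to Z \to 0$ is admissible exact. For density, given $X \in \E$ the generator yields an admissible sequence $0 \to K \to G \to X \to 0$ with $G \in \G$, so $[X \oplus K] = [G] \in H$ and $X$ is a summand of $X \oplus K \in g(H)$. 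The inclusion $f(g(H)) \subseteq H$ is tautological; for the reverse, any $\alpha \in H$ may be written $\alpha = [A] - [B]$, and applying the same syzygy construction to $A$ yields $K$ with $[A \oplus K] = [G] \in H$ and hence $[B \oplus K] = [G] - \alpha \in H$, giving $\alpha = [A \oplus K] - [B \oplus K] \in f(g(H))$.

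\textbf{Step 2: the main identity $g \circ f = \mathrm{id}$.} The inclusion $\X \subseteq g(f(\X))$ is immediate. For the converse I take $X \in \E$ with $[X] \in f(\X)$ and, using that $\X$ is closed under finite direct sums, rewrite this as $[X] + [B] = [A]$ with $A, B \in \X$. The technical heart of the argument will be to upgrade this formal equality in $\K_0(\E)$ to a concrete zigzag of admissible exact sequences in $\E$ relating $X \oplus B$ and $A$ up to common summands---a Schanuel-type realization of $\K_0$-relations valid in any exact category. Once such sequences are in hand, the resolving closure of $\X$ (under extensions and under kernels of admissible epimorphisms) combined with the density hypothesis, used to absorb the auxiliary summands appearing in the realization into objects of $\X$, should force $X \in \X$.

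\textbf{Main obstacle.} The delicate point is that the resolving axioms do \emph{not} include closure under direct summands, yet any Schanuel-style manipulation of a $\K_0$-relation places $X$ into $\X$ only up to an auxiliary summand. Density is precisely what allows this summand to be reabsorbed, and the (co)generator $\G$ ensures that enough syzygies exist to carry out the reabsorption compatibly with the resolving axioms. Coordinating these two ingredients so that the spurious summands cancel rather than propagate will be the crux of the argument, and it also explains why the (co)generator hypothesis cannot be dropped.
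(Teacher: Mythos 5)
Your Step 1 is correct and matches the paper's argument. The problem is Step 2: the inclusion $g(f(\X)) \subseteq \X$ is the entire content of the theorem beyond formalities, and you have not proved it --- you describe a ``Schanuel-type realization of $\K_0$-relations'' as something that ``will be the technical heart'' and ``should force $X \in \X$'', but you never carry it out. As written, the proposal identifies the obstacle without resolving it, so the proof is incomplete at its central step. Moreover, the route you sketch (directly realizing the formal equality $[X]+[B]=[A]$ in $\K_0(\E)$ by a zigzag of admissible exact sequences) is genuinely delicate: an identity in $\K_0(\E)$ is an identity in a quotient of a free abelian group by infinitely many relations, and unwinding it into finitely many exact sequences that interact correctly with the resolving axioms and with the auxiliary summands is precisely the combinatorial difficulty you flag and then leave open.

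The paper (Lemma \ref{lem1}, following Thomason) sidesteps this by going in the opposite direction. Define an equivalence relation on isomorphism classes of $\E$ by $A \sim A'$ iff $A \oplus X \cong A' \oplus X'$ for some $X, X' \in \X$; the quotient $\langle\E\rangle_{\X}$ is an abelian group under $\oplus$ (density of $\X$ supplies inverses), and closure of $\X$ under extensions together with density gives $\langle B\rangle = \langle A\rangle + \langle C\rangle$ for every admissible exact sequence $A \mon B \epi C$ (pad $A$ and $C$ by complements $A'$, $C'$ with $A\oplus A',\ C\oplus C'\in\X$ and apply extension-closure to $A\oplus A' \mon B \oplus A' \oplus C' \epi C \oplus C'$). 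Hence there is a homomorphism $\K_0(\E) \to \langle\E\rangle_{\X}$, $[A]\mapsto\langle A\rangle$, which kills $\langle [X]\mid X\in\X\rangle$. Thus $[A] \in f(\X)$ forces $\langle A\rangle = \langle 0\rangle$, i.e.\ $A \oplus X \cong X'$ with $X, X'\in\X$, and closure under kernels of admissible epimorphisms applied to the split sequence $A \mon A\oplus X \epi X$ yields $A \in \X$. This is the ingredient your proposal is missing; to complete your outline you should either replace the unproved ``realization'' step with this construction or supply the realization argument in full.
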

Here, the notion of a $\G$-resolving (resp. $\G$-coresolving) subcategory is a slight generalization of that of a resolving (resp. coresolving) subcategory.
Indeed, they coincide when $\G$ consists of the projective (resp. injective) objects.
The precise definitions will be given in Definition \ref{(co)res}.

The organization of  this paper is as follows.
In Section 2, we give a proof of our main theorem and several corollaries which include a correspondence between dense (co)resolving subcategories of an exact category and dense triangulated subcategories of its derived category. 
In Section 3, as applications of our results, we discuss when there are only finitely many dense (co)resolving subcategories of finitely generated modules over a left noetherian ring. 

\section{Classification of dense resolving subcategories}
%In this section, we consider classifying dense coherent subcategories of an exact category.

In this section, we give our main result and several corollaries. %which is the classification of dense coherent subcategories of an exact category.

Throughout this paper, let $\A$ be an abelian category, $\E$ an exact category, and $\T$ a triangulated category.
We always assume that all categories are essentially small, and that all subcategories are full and additive. 
For a left noetherian ring $A$, $\mod A$ denotes the category of finitely generated left $A$-modules.

We begin with recalling several notions, which are key notions of this paper.
%Now we introduce several notions to state our main result.
%\begin{dfn}
%\begin{enumerate}[$(1)$]
%\item 
%Let $\X$ be a subcategory of $\E$. We say that $\X$ is a {\it coherent subcategory} of $\E$ if it satisfies the following condition:
%for any exact sequence $A \to B \to C$ in $\E$, if two of $A$, $B$ and $C$ are in $\X$, then so is the third. 
%We say that $\X$ is a {\it thick subcategory} of $\E$ if it is coherent and additively closed.
%\item
%Let $\X$ be a subcategory of $\T$. We say that $\X$ is a {\it triangulated subcategory} of $\T$ if it satisfies the following condition: for any exact triangle $A \to B \to C \to A[1]$ in $\T$, if two of $A$, $B$ and $C$ are in $\X$, then so is the third.
%We say that $\X$ is a {\it thick subcategory} of $\T$ if it is triangulated and additively closed.
%\end{enumerate}
%\end{dfn}

\begin{dfn}
Let $\G$ be a family of objects of $\E$.
We call $\G$ a {\it generator} (resp. a {\it cogenerator}) of $\E$ if for any object $A \in \E$, there is a short exact sequence
$$
A' \mon G \epi A\,\,\,(\mbox{resp.}\ A \mon G \epi A')
$$
in $\E$ with $G \in \G$. 
\end{dfn}

\begin{ex}
\begin{enumerate}[\rm(1)]
\item Clearly, $\E$ is both a generator and a cogenerator of $\E$
\item If $\E$ has enough projective (resp. injective) objects, then the subcategory $\proj \E$ (resp. $\inj \E$) consisting of projective (resp. injective) objects is a generator (resp. a cogenerator) of $\E$.
\end{enumerate}
\end{ex}

Next we give the definitions of $\G$-resolving and $\G$-coresolving subcategories.

\begin{dfn}\label{(co)res}
Let $\X$ be a subcategory of $\E$ and $\G$ a family of objects of $\E$.
\begin{enumerate}[\rm(1)]
\item
We say that $\X$ is a $\G$-{\it resolving subcategory} of $\E$ if the following three conditions are satisfied.
	\begin{enumerate}[\rm(i)]
	\item 
	$\X$ is closed under extensions:  for a short exact sequence $X \mon Y \epi Z$ in $\E$, if $X$ and $Z$ are in $\X$, then so is $Y$.
	\item
	$\X$ is closed under kernels of admissible epimorphisms:  for a short exact sequence $X \mon Y \epi Z$ in $\E$, if $Y$ and $Z$ are in $\X$, then so is $X$.
	\item
	$\X$ contains $\G$.
	\end{enumerate}
If $\E$ has enough projective objects, we shall call $\X$ simply {\it resolving} if it is  $\proj \E$-resolving.
\item
We say that $\X$ is $\G$-{\it coresolving subcategory} of $\E$ if  the following three conditions are satisfied.
	\begin{enumerate}[\rm(i)]
	\item
	$\X$ is closed under extensions:  for a short exact sequence $X \mon Y \epi Z$ in $\E$, if $X$ and $Z$ are in $\X$, then so is $Y$.
	\item
	$\X$ is closed under cokernels of admissible monomorphisms:  for a short exact sequence $X \mon Y \epi Z$ in $\E$, if $X$ and $Y$ are in $\X$, then so is $Z$.
	\item
	$\X$ contains $\G$.
	\end{enumerate}
\end{enumerate}
\end{dfn}

\begin{rem}
Unlike the definition due to Auslander and Bridger \cite{AB}, we do not assume that resolving subcategories are closed under direct summands.
Therefore, our definition is rather close to the definitions in \cite{AR2, KS}.
\end{rem}

The following lemma says that dense $\G$-resolving and dense $\G$-coresolving subcategories are the same thing.

\begin{lem}\label{rescores}
Let $\X$ be a dense subcategory of $\E$.% which is closed under extensions.
Then $\X$ is closed under cokernels of admissible monomorphisms if and only if it is closed under kernels of admissible epimorphisms.
\end{lem}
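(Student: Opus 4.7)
I would deduce the lemma by combining density with extension-closure of $\X$. The statement is self-dual (via passage to $\E^{\mathrm{op}}$, under which admissible monomorphisms and admissible epimorphisms interchange), so it suffices to prove one direction; I will assume that $\X$ is closed under cokernels of admissible monomorphisms and under extensions, and derive closure under kernels of admissible epimorphisms.

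Given a conflation $X \mon Y \epi Z$ with $Y, Z \in \X$, the goal is $X \in \X$. Density provides some $X' \in \E$ with $X \oplus X' \in \X$. Direct-summing the given conflation with the trivial conflation $X' \xrightarrow{\mathrm{id}} X' \to 0$ produces the short exact sequence
$$X \oplus X' \mon Y \oplus X' \epi Z,$$
whose outer terms lie in $\X$. Closure under extensions therefore places $Y \oplus X'$ in $\X$ as well.

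I then apply the cokernel-closure hypothesis twice. First, the split conflation $Y \mon Y \oplus X' \epi X'$ has its first two terms in $\X$, so $X' \in \X$. Second, the split conflation $X' \mon X \oplus X' \epi X$ now also has both of its first two terms in $\X$, so $X \in \X$, completing the argument.

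The only step I expect to be subtle is the use of extension-closure to conclude $Y \oplus X' \in \X$; this hypothesis is not explicitly part of the statement of the lemma, but it is automatic in the intended applications, where $\X$ is a $\G$-(co)resolving subcategory and hence closed under extensions by definition. Without it, density alone does not obviously furnish a conflation exhibiting $X$ as the cokernel of an admissible monomorphism between two objects already known to lie in $\X$.
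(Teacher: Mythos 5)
Your argument is correct for subcategories that are additionally closed under extensions, and you correctly identify the one place where you invoke a hypothesis not present in the statement: the step deducing $Y \oplus X' \in \X$ from the conflation $X \oplus X' \mon Y \oplus X' \epi Z$ is an application of extension-closure, and density alone does not supply it. Since the lemma is stated for an arbitrary dense (full, additive) subcategory $\X$, with no extension-closure assumed, your proof as written establishes only a weaker statement. That weaker statement does happen to suffice for the one place the lemma is used in the paper (identifying dense $\G$-resolving with dense $\G$-coresolving subcategories, both of which are extension-closed by definition), but it does not prove the lemma as stated, and the lemma is in fact true without the extra hypothesis.

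The gap is easy to close, and the repair is exactly the device the paper uses (in the mirror-image direction). Instead of summing the given conflation $X \mon Y \epi Z$ only with $X' \overset{id}{\mon} X' \epi 0$, sum it also with $0 \mon X \overset{id}{\epi} X$, obtaining
$$
X \oplus X' \ \mon\ Y \oplus X' \oplus X \ \epi\ Z \oplus X .
$$
Now the middle term is isomorphic to $Y \oplus (X \oplus X')$, which lies in $\X$ by additivity alone, since $Y \in \X$ and $X \oplus X' \in \X$ by density; no extension-closure is needed. The first term $X \oplus X'$ is also in $\X$, so closure under cokernels of admissible monomorphisms gives $Z \oplus X \in \X$, and then the split conflation $Z \mon Z \oplus X \epi X$, whose first two terms are now known to be in $\X$, gives $X \in \X$. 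Your reduction to a single implication by passing to $\E^{\mathrm{op}}$ is fine and matches the paper's appeal to the dual argument; the paper proves the opposite implication, choosing $Z'$ with $Z \oplus Z' \in \X$ and using the auxiliary conflation $X \oplus Z \mon Y \oplus Z' \oplus Z \epi Z \oplus Z'$, but the two arguments are duals of one another.
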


\begin{proof}
We have only to show the `if' part. 
The `only if' part is proved by the dual argument.

Let $X \overset{f}{\mon} Y \overset{g}{\epi} Z$ be a short exact sequence in $\E$ with $X, Y \in \X$.
Since $\X$ is dense, we can take $Z' \in \E$ with $Z \oplus Z' \in \X$.
Consider a short exact sequence 
$$
X \oplus Z \overset{\left( \begin{smallmatrix} f & 0 \\ 0 & 0 \\ 0 & id_Z \end{smallmatrix} \right)}{\mon} Y \oplus Z' \oplus Z  \overset{\left( \begin{smallmatrix} g & 0 & 0 \\ 0 & id_{Z'} & 0 \\ 0 & 0 & 0 \end{smallmatrix} \right)}{\epi} Z \oplus Z'.
$$
Then $X \oplus Z$ is an object of $\X$ because $\X$ is closed under kernels of admissible epimorphisms.
From the split short exact sequence $Z \mon X \oplus Z \epi X$, we obtain $Z \in \X$ since $\X$ is closed under kernels of admissible epimorphisms.
\end{proof}

Now we recall the definitions of the Grothendieck groups of an exact category and a triangulated category.
\begin{dfn}
\begin{enumerate}[$(1)$]
\item 
Let $\E$ be an exact category. Let $F$ be the free abelian group generated by the isomorphism classes of objects of $\E$. Let $I$ be the subgroup of $F$ generated by the elements of the form $[A]-[B]+[C]$ where $A \mon B \epi C$ are short exact sequences in $\E$.
Then we define the {\it Grothendieck group} of $\E$, denoted by $\K_0(\E)$, as the quotient group $F/I$.
\item 
Let $\T$ be a triangulated category. Let $F$ be the free abelian group generated by the isomorphism classes of objects of $\T$. Let $I$ be the subgroup generated by the elements of the form $[A]-[B]+[C]$ where $A \to B \to C \to A[1]$ are exact triangles in $\T$.
Then we define the {\it Grothendieck group} of $\T$, denoted by $\K_0(\T)$, as the quotient group $F/I$.
\end{enumerate}
\end{dfn}

The following lemma is useful to calculate the Grothendieck groups.
\begin{lem}\label{vergro}
Let $\T$ be an essentially small triangulated category and $\X$ a thick subcategory of $\T$.
Then the natural homomorphism $ \varphi : \K_0(\T)/\lan [X] \mid X \in \X \ran \to \K_0(\T/ \X)$ induced by the quotient functor $\T \to \T/ \X$ is an isomorphism.
\end{lem}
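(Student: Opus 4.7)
The plan is to construct an explicit two-sided inverse $\psi : \K_0(\T/\X) \to \K_0(\T)/\langle [X] \mid X \in \X \rangle$ to $\varphi$. The map $\varphi$ itself is well defined because every $X \in \X$ becomes isomorphic to zero in $\T/\X$, and it is surjective because, under the standard construction of the Verdier quotient, $\T$ and $\T/\X$ share the same class of objects.

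To construct $\psi$, I would send the isomorphism class of an object $Y$ in $\T/\X$ to the coset of $[Y] \in \K_0(\T)$ modulo $\langle [X] \mid X \in \X \rangle$. Two well-definedness checks are required: (a) if $Y \cong Y'$ in $\T/\X$, then $[Y] \equiv [Y']$ modulo $\langle [X] \mid X \in \X \rangle$; and (b) for every exact triangle $Y_1 \to Y_2 \to Y_3 \to Y_1[1]$ in $\T/\X$, the expression $[Y_1] - [Y_2] + [Y_3]$ vanishes modulo $\langle [X] \mid X \in \X \rangle$.

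Point (a) is the crucial step. An isomorphism $Y \cong Y'$ in $\T/\X$ can be represented by a roof $Y \xleftarrow{s} Z \xrightarrow{f} Y'$ in which both $s$ and $f$ are morphisms of $\T$ whose mapping cones lie in $\X$ (this is a standard fact about Verdier quotients, using the thickness of $\X$). The distinguished triangles $Z \to Y \to \operatorname{cone}(s) \to Z[1]$ and $Z \to Y' \to \operatorname{cone}(f) \to Z[1]$ in $\T$ give $[Y] - [Z] = [\operatorname{cone}(s)]$ and $[Y'] - [Z] = [\operatorname{cone}(f)]$ in $\K_0(\T)$, both of which vanish modulo $\langle [X] \mid X \in \X \rangle$. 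Hence $[Y] \equiv [Z] \equiv [Y']$ in the quotient. For point (b), any exact triangle in $\T/\X$ is, by definition, isomorphic in $\T/\X$ to the image of some exact triangle $Y_1' \to Y_2' \to Y_3' \to Y_1'[1]$ in $\T$; since the relation $[Y_1'] - [Y_2'] + [Y_3'] = 0$ already holds in $\K_0(\T)$, applying (a) componentwise yields the desired vanishing.

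Once $\psi$ is well defined, both compositions $\varphi \circ \psi$ and $\psi \circ \varphi$ are induced by the identity on objects and therefore agree with the identity maps. The main obstacle is the careful verification of point (a); it rests on the standard but non-trivial fact that an isomorphism in a Verdier quotient can be represented by a roof both of whose legs have cones in the quotiented-out subcategory, which in turn uses closure of $\X$ under direct summands.
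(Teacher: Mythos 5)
Your proposal is correct and follows essentially the same route as the paper: both construct the inverse $\psi$ by sending $[Y]$ to its coset, verify well-definedness first on isomorphisms in $\T/\X$ via a roof whose two legs have cones in $\X$ (the paper cites \cite[Proposition 2.1.35]{Nee1} for this), and then on exact triangles by lifting them to $\T$. The only cosmetic difference is the orientation of the roof, which does not affect the computation in $\K_0(\T)/\lan [X] \mid X \in \X \ran$.
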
 
\begin{proof}
We construct the inverse $\psi: \K_0(\T/ \X) \to \K_0(\T)/\lan [X] \mid X \in \X \ran$ of $\varphi$.
\begin{claim}
Let $X, Y \in \T$. If $X$ and $Y$ are isomorphic in $\T/ \X$, then $[X]=[Y]$ in $\K_0(\T)/ \lan [X] \mid X \in \X \ran$.
\end{claim}
%$\sum_{n \in \mathbb{Z}} [X^n]=\sum_{n \in \mathbb{Z}} [Y^n]$.
\begin{proof}[Proof of Claim]
Since $X$ and $Y$ are isomorphic in $\T/\X$, there is a diagram 
$$
\xymatrix{
& Z & \\
X \ar[ur]^{s} & & Y \ar[ul]_{t}}
$$
in $\T$ such that the mapping cones $C(s)$ of $s$ and $C(t)$ of $t$ are in $\X$ (see \cite[Proposition 2.1.35]{Nee1}). %(see \cite[Proposision 9.4]{Hos}). 
Therefore, $[X]=[Z] - [C(s)]=[Z]$ and $[Y]=[Z] - [C(t)] =[Z]$ in $\K_0(\T)/ \lan [X] \mid X \in \X \ran$. 
Thus, we have $[X]=[Y]$ in $\K_0(\T)/ \lan [X] \mid X \in \X \ran$.
\renewcommand{\qedsymbol}{$\square$}
\end{proof}
\begin{claim}
Let $X \to Y \to Z \to X[1]$ be an exact triangle in $\T/ \X$. Then $[X] - [Y] + [Z]=0$ in $\K_0(\T)/ \lan [X] \mid X \in \X \ran$.
\end{claim}
\begin{proof}[Proof of Claim]
By the definition of the Verdier quotient, any exact triangle in $\T/ \X$ is isomorphic to an exact triangle in $\T$.
Take an exact triangle $X' \to Y' \to Z' \to X'[1]$ in $\T$ which is isomorphic to $X \to Y \to Z \to X[1]$ in $\T/ \X$.
%We may assume that $X' \to Y' \to Z'$ is termwise split exact sequence.
Then, using Claim 1, we have
$$
[X]-[Y]+[Z]=[X']-[Y']+[Z']=0
$$
in $\K_0(\T)/ \lan [X] \mid X \in \X \ran$.
\renewcommand{\qedsymbol}{$\square$}
\end{proof}
From Claim 2, there is a group homomorphism 
$$
\psi: \K_0(\T/ \X) \to \K_0(\T)/\lan [X] \mid X \in \X \ran, \ [X] \mapsto [X] \mbox{ mod } \lan [X] \mid X \in \X \ran.
$$
By definition, $\varphi$ and $\psi$  are inverse to each other.
\end{proof}

The following theorem is our main result of this paper.

\begin{thm}\label{6main}
Let $\E$ be an essentially small exact category with a (co)generator $\G$.
Then there are one-to-one correspondences among the following sets:
\begin{enumerate}[$(1)$]
\item
\{dense $\G$-resolving subcategories of $\E$\},
\item
\{dense $\G$-coresolving subcategories of $\E$\}, and
\item
\{subgroups of $\K_0(\E)$ containing the image of $\G$\}.
\end{enumerate}
\end{thm}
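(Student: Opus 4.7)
My plan is first to dispatch the equivalence between (1) and (2), and then to construct explicit mutually inverse bijections between (1) (equivalently (2)) and (3). The equivalence (1) $\leftrightarrow$ (2) is immediate from Lemma \ref{rescores}: the defining conditions of $\G$-resolving and $\G$-coresolving subcategories differ only in the closure condition (kernels of admissible epimorphisms versus cokernels of admissible monomorphisms), and these two conditions become equivalent once density is imposed. Thus the set of dense $\G$-resolving subcategories of $\E$ literally coincides with the set of dense $\G$-coresolving subcategories of $\E$.

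For the correspondence with subgroups I define
$$
f(\X) := \langle [X] \mid X \in \X \rangle \quad \text{and} \quad g(H) := \{X \in \E \mid [X] \in H\}.
$$
Verifying that $g(H)$ is a dense $\G$-(co)resolving subcategory is routine: it is extension-closed and closed under kernels of admissible epimorphisms because every short exact sequence $X \mon Y \epi Z$ forces $[Y]=[X]+[Z]$; it contains $\G$ because $H$ contains each $[G]$; and it is dense because, for any $A\in\E$, the (co)generator property produces $G\in\G$ and $A'\in\E$ with $[A\oplus A']=[G]\in H$. Clearly $f(\X)$ contains the image of $\G$. The identity $f(g(H))=H$ is trivial in one direction; for the reverse, given $h\in H$, write $h=[A]-[B]$ in $\K_0(\E)$ and apply the cogenerator property to $B$ to obtain $B\mon G\epi B'$. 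Then $[A\oplus B']=h+[G]\in H$ and $[B\oplus B']=[G]\in H$, so both $A\oplus B'$ and $B\oplus B'$ lie in $g(H)$ and $h=[A\oplus B']-[B\oplus B']\in f(g(H))$ (the generator case is dual).

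The main obstacle is the inclusion $g(f(\X))\subseteq \X$. Given $Y$ with $[Y]\in f(\X)$, I write $[Y]+[X^-]=[X^+]$ in $\K_0(\E)$ for some $X^\pm\in\X$. Unwinding the definition of $\K_0(\E)$ as the quotient of the free abelian group on isomorphism classes by the short exact sequence relations, this equality is witnessed by finitely many short exact sequences $C_j\mon D_j\epi E_j$ with signs $\epsilon_j\in\{\pm 1\}$ which, after separating positive and negative contributions and passing to direct sums, yield an honest isomorphism $Y\oplus X^-\oplus M\cong X^+\oplus N$ in $\E$. The plan is then to promote this isomorphism by adjoining companion objects via density and exploiting the 2-out-of-3 property for short exact sequences enjoyed by every dense $\G$-(co)resolving subcategory (extension-closure plus kernel/cokernel-closure, the latter equivalences being guaranteed by Lemma \ref{rescores}), so that both sides lie in $\X$; the resulting split short exact sequence $Y\mon W\epi Z$ with $W,Z\in\X$ then forces $Y\in\X$. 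I expect the induction on the number of witnessing short exact sequences, and in particular the bookkeeping needed to keep the enlarged terms inside $\X$ while preserving the exactness data, to be the combinatorially most delicate step. A cleaner alternative route would be to reduce everything to Thomason's Theorem \ref{Thomason} applied to the Verdier quotient $\db(\E)/\thick(\G)$, whose Grothendieck group is $\K_0(\E)$ modulo the image of $\G$ by Lemma \ref{vergro}, after identifying dense $\G$-resolving subcategories of $\E$ with dense triangulated subcategories of this quotient.
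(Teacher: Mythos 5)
Your overall architecture matches the paper's: the identification of (1) with (2) via Lemma \ref{rescores}, the maps $f$ and $g$, the verification that $g(H)$ is dense $\G$-resolving and that $fg(H)=H$ by rewriting an arbitrary element of $\K_0(\E)$ as $[A]-[G]$ with $G\in\G$ are all exactly as in the paper. However, there is a genuine gap at the step you yourself flag as the main obstacle, $gf(\X)\subseteq\X$, which is the actual content of the theorem (Lemma \ref{lem1} in the paper). You reduce it to: an equality $[Y]+[X^-]=[X^+]$ in $\K_0(\E)$ yields, after unwinding the defining relations, an isomorphism of direct sums whose extra summands involve the terms $C_j, D_j, E_j$ of the witnessing short exact sequences --- and these are \emph{arbitrary} objects of $\E$, not objects of $\X$. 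Your plan to ``promote this isomorphism by adjoining companion objects via density'' and run an induction on the number of witnessing sequences is not carried out, and this is precisely where all the difficulty sits: one must replace each $[D_j]$ by $[C_j]+[E_j]$ ``up to $\X$'' coherently across the whole formal sum. The paper avoids this bookkeeping entirely by a Thomason-style device: it forms the abelian group $\langle\E\rangle_\X=(\E/_{\cong})/_{\sim}$, where $A\sim A'$ iff $A\oplus X\cong A'\oplus X'$ for some $X,X'\in\X$ (density supplies inverses), checks using density and extension-closure that every short exact sequence becomes additive in this group, and so obtains a homomorphism $\varphi:\K_0(\E)\to\langle\E\rangle_\X$ killing $\langle[X]\mid X\in\X\rangle$; then $[Y]\in\langle[X]\mid X\in\X\rangle$ forces $\langle Y\rangle=\langle 0\rangle$, i.e.\ $Y\oplus X\cong X'$ with $X,X'\in\X$, and closure under kernels of admissible epimorphisms applied to the split sequence $Y\mon Y\oplus X\epi X$ gives $Y\in\X$. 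You should either adopt this construction or actually execute the induction you describe.

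Your proposed ``cleaner alternative'' via Thomason's theorem does not work in the stated generality: Lemma \ref{groder} (identifying $\K_0(\E)$ with $\K_0(\db(\E))$) requires $\E$ to be weakly idempotent complete, which Theorem \ref{6main} does not assume, and in the paper the identification of dense $\G$-resolving subcategories of $\E$ with dense triangulated subcategories of the derived category (Corollary \ref{6cor}) is \emph{deduced from} the main theorem, so invoking it here would be circular without an independent proof.
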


We will show this theorem only in the case that $\G$ is a generator because in the cogenerator case, it can be shown by the dual argument.
The following lemma is essential in the proof of our theorem. 
\begin{lem}\label{lem1}
Let $\G$ be a generator of $\E$ and $\X$ a dense $\G$-resolving subcategory of $\E$.
Then for an object $A$ in $\E$,  $A \in \X$ if and only if $[A] \in \langle [X] \mid X \in \X \rangle$.
\end{lem}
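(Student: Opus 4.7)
The plan is to handle each direction separately. The ``only if'' direction is immediate: if $A \in \X$, then $[A]$ is one of the generators of the subgroup. For the ``if'' direction, assume $[A] \in \langle [X] \mid X \in \X \rangle$. One can then write
\[
[A] \;=\; \sum_i [X_i] - \sum_j [Y_j] \qquad \text{in } \K_0(\E)
\]
for some $X_i, Y_j \in \X$. Setting $X := \bigoplus_i X_i$ and $Y := \bigoplus_j Y_j$, both of which lie in $\X$ since $\X$ is additive, this becomes $[A \oplus Y] = [X]$ in $\K_0(\E)$. The strategy is to upgrade this equation of classes to an honest isomorphism of objects, and then to exploit density together with the closure conditions to conclude $A \in \X$.

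By the definition of $\K_0(\E)$, the relation $[A \oplus Y] - [X] = 0$ lifts to an identity in the free abelian group $F$ on isomorphism classes of the form
\[
[A \oplus Y] - [X] \;=\; \sum_k \epsilon_k\bigl([L_k] - [M_k] + [N_k]\bigr),
\]
where each $L_k \mon M_k \epi N_k$ is a short exact sequence in $\E$ and each $\epsilon_k \in \{\pm 1\}$. Moving all negative terms to the opposite side yields an equality of non-negative combinations of basis elements in $F$; comparing coefficients basis-wise gives an honest isomorphism
\[
A \oplus U \;\cong\; X \oplus V
\]
in $\E$, where $U$ and $V$ are suitable finite direct sums of $Y$ and of the various $L_k$, $M_k$, $N_k$.

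The key step is to ``fatten'' $U$ and $V$ so that both become objects of $\X$. Using density, for each $k$ pick $L_k', N_k' \in \E$ with $L_k \oplus L_k', N_k \oplus N_k' \in \X$. The direct sum of the short exact sequence $L_k \mon M_k \epi N_k$ with the split sequence $L_k' \mon L_k' \oplus N_k' \epi N_k'$ is a short exact sequence whose outer terms both lie in $\X$, and closure under extensions then places $M_k \oplus L_k' \oplus N_k'$ in $\X$ as well. Setting $W := \bigoplus_k (L_k' \oplus N_k')$, a redistribution of the summands shows that $U \oplus W$ and $V \oplus W$ are direct sums of objects of $\X$, hence themselves lie in $\X$. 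Adding $W$ to both sides of the isomorphism gives
\[
A \oplus (U \oplus W) \;\cong\; X \oplus (V \oplus W) \;\in\; \X,
\]
and applying closure under kernels of admissible epimorphisms to the split exact sequence $A \mon A \oplus (U \oplus W) \epi U \oplus W$ finally yields $A \in \X$.

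I expect the main obstacle to be the combinatorial step of lifting the relation from $\K_0(\E)$ to an explicit isomorphism in $\E$ and carefully bookkeeping the multisets on either side; once that is in place, the density trick of fattening each short exact sequence by a split sequence with outer summands in $\X$ reduces everything to the two closure properties of a $\G$-resolving subcategory.
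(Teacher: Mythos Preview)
Your argument is correct, and it takes a genuinely different route from the paper's proof. The paper does not unwind the relations in $\K_0(\E)$ directly; instead it constructs an auxiliary abelian group $\langle \E \rangle_{\X}$ whose elements are isomorphism classes modulo the relation ``$A \sim A'$ iff $A \oplus X \cong A' \oplus X'$ for some $X, X' \in \X$'', shows (using density and closure under extensions) that the assignment $[A] \mapsto \langle A \rangle$ descends to a group homomorphism $\varphi: \K_0(\E) \to \langle \E \rangle_{\X}$ with $\langle [X] \mid X \in \X \rangle \subset \ker \varphi$, and then reads off the conclusion: if $[A]$ lies in the subgroup then $\langle A \rangle = 0$, so $A \oplus X \cong X'$ for some $X, X' \in \X$, and the split sequence $A \mon X' \epi X$ forces $A \in \X$. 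Your approach trades this conceptual packaging for explicit bookkeeping with the defining relations in the free abelian group; it is more hands-on but entirely elementary, and it makes transparent exactly where each closure hypothesis enters (extensions for the fattening of $M_k$, kernels of admissible epimorphisms only at the final split sequence). One incidental gain is that your proof never invokes the generator $\G$ at all: it works for any dense additive subcategory closed under extensions and kernels of admissible epimorphisms, whereas the paper routes part of its argument through the observation (requiring a generator) that every element of $\K_0(\E)$ has the form $[A] - [G]$ with $G \in \G$.
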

\begin{proof}
Define an equivalence relation $\sim$ on the isomorphism classes $\E/_{\cong}$ of objects of $\E$, as follows:
$A \sim A'$ if there are $X, X' \in \X$ such that $A \oplus X \cong A' \oplus X'$.
Set $\langle \E \rangle_{\X} :=(\E/_{\cong})/_{\sim}$ and denote by $\langle A \rangle$ the class of $A$.
Then $\langle \E \rangle_{\X}$ is an abelian group with $\langle  A \rangle + \langle B \rangle := \langle  A \oplus B \rangle$.
Indeed, obviously, $+$ is well-defined, commutative, associative, and $\lan 0 \ran$ is an identity element.
Since $\X$ is dense, for any $A \in \E$, there is $A' \in \E$ such that $A \oplus A' \in \X$, and hence $\lan A \ran + \lan A' \ran = \lan A \oplus A' \ran = \lan 0 \ran$. 
Therefore, $\lan A' \ran$ is an inverse element of $\lan A \ran$.

Let $A \overset{f}{\mon} B \overset{g}{\epi} C$ be a short exact sequence  in $\E$. Taking $A', C' \in \E$ with $A \oplus A', C \oplus C' \in \X$ and considering a short exact sequence
$$
A \oplus A' \overset{\left( \begin{smallmatrix} f & 0 \\ 0 & id_{A'} \\ 0 & 0 \end{smallmatrix} \right)}{\mon} B \oplus A' \oplus C'  \overset{\left( \begin{smallmatrix} g & 0 & 0 \\ 0 & 0 & 0 \\ 0 & 0 & id_{C'} \end{smallmatrix} \right)}{\epi} C \oplus C'.
$$
%$$
%A \oplus A' \mon B \oplus A' \oplus C'  \epi C \oplus C',
%$$
we have $B \oplus A' \oplus C' \in \X$. 
This shows $\lan B \ran - \lan A \ran - \lan C \ran = \lan B \oplus A' \oplus C' \ran =\lan 0 \ran$.
Therefore, there is a group homomorphism
$$
\varphi : \K_0(\E) \to \lan \E \ran_{\X},\ \  [A] \mapsto \lan A \ran.
$$
Note that $\lan [X] \mid X \in \X \ran$ is contained in $\ker \varphi$.

From the definition of the Grothendieck group, any element of $\K_0(\E)$ is denoted by $[A]-[B]$.
Moreover, since there is a short exact sequence
$$
B' \mon G \epi B
$$ 
in $\E$ with $G \in \G$, $[A] - [B] = [A \oplus B'] - [G]$.
Thus, any element of $\K_0(\E)$ is denoted by $[A] - [G]$ with $G \in \G$.

Let $[A] - [G]$ with $G \in \G$ be an element of $\ker \varphi$.
Since $\X$ contains $\G$, $[A] \in \ker \varphi$. This means $\lan A \ran = \lan 0 \ran$ and there are $X, X' \in \X$ such that $A \oplus X \cong X'$.
%If $[A]-[B] = [A \oplus B'] -[G] \in \ker \phi$, then $[A \oplus B'] \in \ker \phi$.
%This implies $\lan A \oplus B' \ran =0$ and there are $X, X' \in \X$ such that $A \oplus B' \oplus X \cong X'$.
Considering the split short exact sequence
$$
A \mon A \oplus X \epi X,
$$
we obtain $A \in \X$ since $\X$ is closed under kernels of epimorphisms.
Thus, $A \in \X$ if and only if $[A] \in \lan [X] \mid X \in \X \ran$.
%Hence, $[A] -[G] \in \langle [X] \mid X \in \X \rangle$.
%Therefore, $\ker \varphi = \langle [X] \mid X \in \X \rangle$. 
%This shows that $A \in \X$ if and only if $[A] \in \langle [X] \mid X \in \X \rangle$.
\end{proof}

\begin{proof}[Proof of Theorem \ref{6main}]
By Lemma \ref{rescores}, the set $(2)$ is nothing but the set $(1)$.
Therefore, we show that there is a one-to-one correspondence between the sets $(1)$ and $(3)$.

For a dense $\G$-resolving subcategory $\X$, define 
$$
f(\X) := \lan [X]  \mid X \in \X \ran,
$$
and for a subgroup $H$ of $\K_0(\E)$ containing the image of $\G$, define
$$
g(H) := \{A \in \E \mid [A] \in H\}.
$$
We show that $f$ and $g$ give mutually inverse bijections between $(1)$ and $(3)$.

First note that $g(H) :=\{A \in \E\mid [A] \in H\}$ is a dense $\G$-resolving subcategory of $\E$ for a subgroup $H$ of $\K_0(\E)$ containing the image of $\G$.
Indeed, for any object $A \in \E$, take a short exact sequence $A' \mon G \epi A$ in $\E$ with $G \in\G$.
Then $[A \oplus A'] = [A] + [A'] = [G] \in H$, and hence $A \oplus A' \in g(H)$. Thus $g(H)$ is dense.
Obviously, $g(H)$ contains $\G$.
Furthermore, for any short exact sequence $A \mon B \epi C$, the relation $[A] - [B] + [C]=0$ implies that $g(H)$ is $\G$-resolving.
Besides, $f(\X)$ is clearly a subgroup of $\K_0(\E)$ containing the image of $\G$.
As a result, $f$ and $g$ are well-defined maps between the sets $(1)$ and $(3)$

Let $H$ be a subgroup of $\K_0(\E)$ containing the image of $\G$. 
Then the inclusion $fg(H) \subset H$ is trivial.
For any $[A]-[G] \in H$ with $G \in \G$, $[A]=([A]-[G])+[G] \in H$ implies $A \in g(H)$, and thus $[A] - [G] \in fg(H)$. 
Therefore, $fg(H)=H$.

Let $\X$ be a dense resolving subcategory of $\E$ containing $\G$. 
%Then $f(\X)$ is clearly a subgroup of $\K_0(\E)$ containing the image of $\G$.
Then the inclusion $\X \subset gf(\X)$ is trivial.
Conversely, for any $A \in gf(\X)$, since $[A] \in f(\X) = \lan [X] \mid X \in \X \ran$, we have $A \in \X$ by Lemma \ref{lem1}. 
Therefore, $gf(\X)= \X$.
Consequently, $f$ and $g$ are mutually inverse bijections between $(1)$ and $(3)$.
\end{proof}

For the rest of this section, we give several corollaries of Theorem \ref{6main}.
%\begin{cor}
%Let $A$ be a ring and $\X$ a dense coherent subcategory of $\Mod A$ the category of left $A$-modules.
%If $\X$ contains all projective $A$-modules, then $\X=\Mod A$.
%\end{cor}
%\begin{proof}
%For any $X \in \Mod \Lambda$, since $X^{(\mathbb{N})} \cong X^{(\mathbb{N})} \oplus X$, we have $[X]=[X^{(\mathbb{N})}] - [X^{(\mathbb{N})}]$. Thus $\K_0(\mathsf{Mod} \Lambda)=0$.
%From Theorem \ref{6main}, there is only one dense coherent subcategory containing all projective $\Lambda$-modules, which is $\Mod \Lambda$.
%\end{proof}
To state the first corollary, we have to give the definition for a given exact category to be weakly idempotent complete. 

\begin{dfn}
An exact category $\E$ is said to be {\it weakly idempotent complete} if for any morphism $p: X \to Y$ and $i: Y \to X$ with $pi=id_Y$, $p$ has a kernel and $i$ has a cokernel. 
\end{dfn}
Note that every abelian category is weakly idempotent complete.

A chain complex $N$ over $\E$ is called {\it acyclic} if for each integer $n$, the $n$-th differential $d_N^n$ factors as
$$
\xymatrix@M=5pt{
N^n \ar[rr]^{d_N^n} \ar@{->>}[dr]_{p^n} & & N^{n+1} \\
& Z^{n+1} \ar@{>->}[ur]_{i^n}
}
$$
such that $Z^{n} \overset{i^{n-1}}{\mon} N^n \overset{p^n}{\epi} Z^{n+1}$ is a short exact sequence for each integer $n$.
%Let $\mathsf{K_{ac}}(\E)$ be the smallest thick subcategory of $\mathsf{K}(\E)$ containing all acyclic complexes.
Then by \cite[Lemma 1.1]{Nee0}, the subcategory $\mathsf{K_{ac}^*}(\E):=\{N \in \mathsf{K}^*(\E) \mid N \mbox{ is isomorphic to an acyclic complex}\}$ of $\mathsf{K}^*(\E)$ is triangulated, where $\mathsf{K}^*(\E)$ stands for the homotopy category of $\E$ and $* \in \{\emptyset, -, +, b\}$. 
The derived category of $\E$ is by definition the Verdier quotient
$$
\mathsf{D}^*(\E) := \mathsf{K}^*(\E) / \mathsf{K_{ac}^*}(\E).
$$
For details, see \cite{Buh, Nee0}.
%Here, if $\E$ is idempotent complete, then $\{N \in \mathsf{K}^*(\E); \mbox{acyclic} \}$ is a thick subcategory of $\mathsf{K}^*(\E)$ (see \). 

The following lemma is well known for the case where $\E$ is abelian.
\begin{lem}\label{groder}
Let $\E$ be a weakly idempotent complete essentially small exact category.
%Then the natural homomorphism $ \varphi : \K_0(\E) \to \K_0(\db(\E))$ induced by the canonical functor $\E \to \db(\E)$ is an isomorphism.
Then the canonical functor $\E \to \db(\E)$ induces an isomorphism $ \varphi : \K_0(\E) \to \K_0(\db(\E))$.
\end{lem}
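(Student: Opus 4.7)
The plan is to construct an inverse $\psi: \K_0(\db(\E)) \to \K_0(\E)$ via the Euler characteristic $\chi(C^\bullet) := \sum_i (-1)^i [C^i]$ on bounded complexes. Since $\E$ is weakly idempotent complete, $\mathsf{K_{ac}^b}(\E)$ is a thick subcategory of $\kb(\E)$ (see \cite[Lemma 1.1]{Nee0}), and Lemma \ref{vergro} yields
$$
\K_0(\db(\E)) \cong \K_0(\kb(\E))\big/\bigl\langle [N] \mid N \in \mathsf{K_{ac}^b}(\E) \bigr\rangle.
$$
It therefore suffices to exhibit $\chi$ as a well-defined homomorphism on $\K_0(\kb(\E))$ that kills the classes of acyclic complexes.

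For descent to $\K_0(\kb(\E))$, two verifications are needed. First, every exact triangle in $\kb(\E)$ is isomorphic to a cone triangle $A^\bullet \to B^\bullet \to \operatorname{cone}(f)^\bullet \to A^\bullet[1]$, and since $\operatorname{cone}(f)^n = A^{n+1} \oplus B^n$ a direct computation yields $\chi(\operatorname{cone}(f)) = \chi(B^\bullet) - \chi(A^\bullet)$, matching the defining relation. Second, independence of $\chi$ on the chosen representative within its homotopy-equivalence class reduces via the cone formula to the vanishing of $\chi$ on contractible bounded complexes; in a weakly idempotent complete exact category every contractible complex is acyclic, so this reduces further to the acyclic case. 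Vanishing of $\chi$ on an acyclic $N^\bullet$ is in turn immediate from the definition: the differentials factor with short exact sequences $Z^n \mon N^n \epi Z^{n+1}$, whence $[N^n] = [Z^n] + [Z^{n+1}]$ in $\K_0(\E)$, and telescoping the alternating sum gives $\chi(N^\bullet) = 0$.

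With $\psi$ well-defined, $\psi\varphi([A]) = \chi(A[0]) = [A]$ is immediate. For $\varphi\psi = \operatorname{id}$, I would show $[C^\bullet] = \sum_i (-1)^i [C^i]$ in $\K_0(\db(\E))$ by induction on the width of $C^\bullet$. A one-term complex $A[-n]$ yields $[A[-n]] = (-1)^n [A]$ via iterated application of the relation $[X] + [X[1]] = 0$ coming from the exact triangle $X \to 0 \to X[1] \to X[1]$. For a wider complex with smallest nonvanishing degree $a$, the brutal truncation supplies a termwise-split short exact sequence of complexes $\sigma^{>a} C^\bullet \mon C^\bullet \epi C^a[-a]$, hence an exact triangle in $\db(\E)$; additivity of $[\,\cdot\,]$ combined with the inductive hypothesis closes the step.

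The principal obstacle is the well-definedness of $\psi$ — verifying both the cone-formula relation and the vanishing on contractible and acyclic complexes. The weakly idempotent completeness of $\E$ is indispensable at precisely these points: it is what makes contractible complexes acyclic and, via \cite{Nee0}, what ensures $\mathsf{K_{ac}^b}(\E)$ is thick so that Lemma \ref{vergro} is available. Once these two identities are in hand, the surjectivity and injectivity of $\varphi$ follow from straightforward formal manipulations with Euler characteristics and truncation triangles.
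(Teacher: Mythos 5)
Your proposal is correct and follows essentially the same route as the paper: both construct the inverse as the Euler characteristic, descend it to $\K_0(\kb(\E))$ and then through Lemma \ref{vergro} using thickness of the acyclic complexes (which is where weak idempotent completeness enters), kill acyclic complexes by telescoping the short exact sequences $Z^n \mon N^n \epi Z^{n+1}$, and recover $\varphi\psi = \mathrm{id}$ via brutal truncations. The only point the paper spells out that you take for granted is the well-definedness of $\varphi$ itself, namely that a short exact sequence $X \mon Y \epi Z$ in $\E$ becomes an exact triangle in $\db(\E)$; the paper verifies this by exhibiting the mapping-cone triangle $X \to Y \to C(f) \to X[1]$ in $\kb(\E)$ via a pushout of degreewise split sequences and then noting that the cone of $(0,g)\colon C(f) \to Z$ is acyclic, so that $C(f) \cong Z$ in $\db(\E)$.
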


\begin{proof}
%\new{
%\begin{claim}
%Let $f, g : X \to Y$ be homotopic chain maps in $\cb(\E)$.
%Then $C(f) \cong C(g)$ in $\cb(\E)$.
%\end{claim}

%\begin{proof}
%Take a homotopy $\{s_n: X^n \to Y^{n+1} \}_{n \in \Z}$ from $f$ to $g$, namely, $f^n - g^n =s^{n+1} d^n + d^{n-1} s^n$ for each $n \in \Z$.
%Then we obtain the following commutative diagram
%$$
%\xymatrix{
%X^{n+1} \oplus Y^n \ar[r]^{\left(
%\begin{smallmatrix}
%-d^{n+1} & 0 \\
%f^{n+1} & d^n
%\end{smallmatrix}
%\right)} \ar[d]_{
%\left(
%\begin{smallmatrix}
%1 & 0 \\
%s^{n+1} & 1
%\end{smallmatrix}
%\right)}
% & X^{n+2} \oplus Y^{n+1} \ar[d]^{
%\left(
%\begin{smallmatrix}
%1 & 0 \\
%s^{n+2} & 1
%\end{smallmatrix}
%\right)} \\
%X^{n+1} \oplus Y^n \ar[r]_{\left(
%\begin{smallmatrix}
%-d^{n+1} & 0 \\
%g^{n+1} & d^n
%\end{smallmatrix}
%\right)} & X^{n+2} \oplus Y^{n+1} 
%}
%$$
%and hence $\varphi:=\{\left(
%\begin{smallmatrix}
%1 & 0 \\
%s^{n+1} & 1
%\end{smallmatrix}
%\right)\}_{n \in \Z}: C(f) \to C(g)$ is a chaim map.
%Similarly, $\psi:=\{\left(
%\begin{smallmatrix}
%1 & 0 \\
%-s^{n+1} & 1
%\end{smallmatrix}
%\right)\}_{n \in \Z}: C(g) \to C(f)$ is also a chaim map.
%Moreover, we can easily check that $\psi \varphi=1$ and $\varphi \psi =1$.
%\end{proof}
%}

First note that the category $\cb(\E)$ of bounded chain complexes over $\E$ is a Frobenius exact category with respect to sequences which are degreewise split short exact sequences.
We usually consider this exact structure. 
Let $X \overset{f}{\mon} Y \overset{g}{\epi} Z$ be a short exact sequence in $\E$.
Put $X \xrightarrow{i} C(id_X) \xrightarrow{p} X[1]$ to be the natural degreewise split short exact sequence in $\cb(\E)$, where $C(id_X)$ is the mapping cone of $id_X$ and $i={}^t(0, 1)$, $p=(1, 0)$.
Then we have the following commutative diagram in $\cb(\E)$:
$$
\begin{CD}
X @>i>> C(id_X) @>p>> X[1]  \\
@VfVV @VuVV @| \\
Y @>j>> C(f)  @>q>> X[1] , 
\end{CD}
$$
where $j={}^t(0, 1)$, $q=(1, 0)$, $u=
\left(
\begin{smallmatrix}
1 & 0 \\
0 & f
\end{smallmatrix}
\right)$ and each row is a short exact sequence in $\cb(\E)$.
Then by \cite[Proposition 2.12]{Buh}, the left square is a pushout diagram of $f$ and $i$ in $\cb(\E)$.
Thereby, $X \xrightarrow{f} Y \xrightarrow{j} C(f) \xrightarrow{q} X[1]$ is an exact triangle in $\kb(\E)$ by the definition of the triangulated structure of the homotopy category.
On the other hand, the mapping cone of $v=(0, g): C(f) \to Z$ is $C(v)=(\cdots \to 0 \to X \xrightarrow{f} Y \xrightarrow{g} Z \to 0 \to \cdots)$.
This complex $C(v)$ is acyclic because $X \overset{f}{\mon} Y \overset{g}{\epi} Z$ is a short exact sequence in $\E$.
Therefore, $v$ is an invertible morphism in $\db(\E)$ and we can define the morphism $q v^{-1}:Z \to X[1]$ in $\db(\E)$.
Using this morphism, we obtain the following isomorphism of triangles in $\db(\E)$:
$$
\begin{CD}
X @>f>> Y @>j>> C(f) @>q>> X[1] \\
@| @| @V\overset{v}{\cong}VV @| \\
X @>f>> Y @>g>> Z @>q v^{-1}>> X[1].
\end{CD}
$$
Thus, $X \xrightarrow{f} Y \xrightarrow{g} Z \xrightarrow{q v^{-1}} X[1]$ is also an exact triangle in $\db(\E)$. 
Hence the canonical functor $\E \to \db(\E)$ induces a homomorphism 
$$
\varphi : \K_0(\E) \to \K_0(\db(\E))\,\, [X] \mapsto [X].
$$
%Note that for any complex $X \in \kb(\E)$, we obtain $[X]= \sum_{n \in \mathbb{Z} (-1)^n[X^n]}$.

To show that $\varphi$ is an isomorphism, we construct the inverse $\psi: \K_0(\db(\E)) \to \K_0(\E)$ of $\varphi$.
Since $\E$ is weakly idempotent complete, the subcategory consisting of acyclic complexes forms a thick subcategory of $\kb(\E)$ (see \cite[Proposition 10.14]{Buh}), and hence the quotient functor $\kb(\E) \to \db(\E)$ induces an isomorphism
$$
\K_0(\kb(\E))/ \lan [X] \mid X \mbox{ is  acyclic } \ran \cong \K_0(\db(\E))
$$
by Lemma \ref{vergro}.

On the other hand, for an acyclic complex  $X$, it is decomposed into short exact sequences
$$
Z^i \mon X^i \epi Z^{i+1}.
$$
Therefore, we obtain equalities
$$
\sum_{n \in \mathbb{Z}} (-1)^n[X^n]=\sum_{n \in \mathbb{Z}} (-1)^n \{[Z^n]+ [Z^{n+1}] \}=\sum_{n \in \mathbb{Z}} (-1)^n[Z^n]-\sum_{n \in \mathbb{Z}} (-1)^{n+1}[Z^{n+1}]=0
$$
in $\K_0(\E)$.
Here, $Z^i = 0$ for all but finitely many integers $i$ since $X$ is bounded.
Note that for any complex $X \in \kb(\E)$, we obtain $[X]= \sum_{n \in \mathbb{Z}} (-1)^n[X^n] = \varphi(\sum_{n \in \mathbb{Z}} (-1)^n[X^n])$ in $\K_0(\kb(\E))$ by using hard truncations.
Thus $[X]=0$ for any acyclic complex $X \in \kb(\E)$, and hence the quotient functor induces an isomorphism $\K_0(\kb(\E)) \cong \K_0(\db(\E))$.

For a complex $X \in \kb(\E)$, we set $(X):=\sum_{n \in \mathbb{Z}} (-1)^n[X^n] \in \K_0(\E)$. 
%For a complex $X \in \kb(\E)$, we have an equality $[X] = \sum_{n \in \mathbb{Z}} (-1)^n[X^n]$ in $\K_0(\kb(\E))$. 
%Indeed, since $X$ is a mapping cone of 
%$$
%X^{s}[-s-1] \xrightarrow{-d^{s}} \sigma_{>s}X,
%$$
%one has $[X]=[X^s[-s]] + [\sigma_{>s}X]=(-1)^s[X^s] + [\sigma_{>s}X]$.
%Iterating this procedure, we conclude $[X] = \sum_{n \in \mathbb{Z}} (-1)^n[X^n]$ in $\K_0(\kb(\E))$.
If $f: X \to Y$ is an isomorphism in $\kb(\E)$, then the mapping cone $C(f)$ of $f$ is isomorphic to an acyclic complex $0$ in $\kb(\E)$.
Therefore, $C(f)$ is also an acyclic, and hence $(C(f))=0$.
Thus, we obtain $(X)=(Y)$ from a degreewise split short exact sequence $Y \to C(f) \to X[1]$.
Furthermore, an exact triangle $X \xrightarrow{f} Y \to Z \to X[1]$ in $\kb(\E)$ is isomorphic to an exact triangle of the form $X \xrightarrow{f} Y \to C(f) \to Z[1]$. Since $Y \to C(f) \to X[1]$ is a degreewise split short exact sequence, we have 
$$
(Y)-(Z)-(X) = (Y) - (C(f)) -(X)=(Y) - (C(f)) + (X[1])=0.
$$
Hence,  there is a group homomorphism $\K_0(\kb(\E)) \to \K_0(\E)$, $[X] \mapsto (X)$. Finally, we get a group homomorphism $\psi: \K_0(\db(\E)) \to \K_0(\E)$. 
Then we can easily show that $\varphi$ and $\psi$ are inverse to each other.
\end{proof} 

Combining Theorem \ref{Thomason} and Theorem \ref{6main}, we have the following corollary.

\begin{cor}\label{6cor}
Let $\E$ be a weakly idempotent complete essentially small exact category with a (co)generator $\G$.
Then there are one-to-one correspondences among the following sets:
\begin{enumerate}[$(1)$]
\item
\{dense $\G$-(co)resolving subcategories of $\E$\},
\item
\{dense triangulated subcategories of $\db(\E)$ containing $\G$\}, and
\item
\{subgroups of $\K_0(\E)$ containing the image of $\G$\}.
\end{enumerate}
\end{cor}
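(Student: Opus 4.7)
The plan is to deduce the corollary by stitching together three results already established: Theorem \ref{6main}, Thomason's classification (Theorem \ref{Thomason}), and the Grothendieck group isomorphism of Lemma \ref{groder}. The equivalence between (1) and (3) is precisely the content of Theorem \ref{6main}, so only a bijection between (2) and (3) requires argument, and this will be carried out by transporting Thomason's theorem across the isomorphism $\K_0(\E)\cong \K_0(\db(\E))$.

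First I would apply Theorem \ref{Thomason} to the essentially small triangulated category $\T = \db(\E)$. This yields a one-to-one correspondence between dense triangulated subcategories of $\db(\E)$ and subgroups of $\K_0(\db(\E))$, given by $\X \mapsto \langle [X] \mid X \in \X \rangle$ and $H \mapsto \{X \in \db(\E) \mid [X] \in H\}$. Next, Lemma \ref{groder} supplies the isomorphism $\varphi : \K_0(\E) \to \K_0(\db(\E))$ induced by the canonical functor $\E \to \db(\E)$, which by construction sends $[G]_{\E}$ to the class $[G]_{\db(\E)}$ of the stalk complex of $G$ for every $G \in \G$. Using $\varphi$ to identify the two Grothendieck groups converts Thomason's correspondence into a bijection between dense triangulated subcategories of $\db(\E)$ and subgroups of $\K_0(\E)$.

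The remaining step is to check that this bijection restricts to the indicated subsets of (2) and (3). A dense triangulated subcategory $\X \subseteq \db(\E)$ contains $\G$ if and only if every $G \in \G$, viewed as a complex concentrated in degree zero, belongs to $\X$; by the identity $\X = g(f(\X))$ from Thomason's bijection, this is equivalent to $[G]_{\db(\E)} \in \langle [X] \mid X \in \X \rangle$ for all $G \in \G$. Pulling back along $\varphi^{-1}$ and invoking the compatibility just noted, this is equivalent to saying that the image of $\G$ in $\K_0(\E)$ lies in the subgroup of $\K_0(\E)$ corresponding to $\X$. Combining this with Theorem \ref{6main} closes the triangle of bijections among (1), (2), and (3).

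The main obstacle is conceptual rather than technical: one must verify that the identification $\K_0(\E) \cong \K_0(\db(\E))$ is compatible with the way objects of $\G$ are regarded on either side, so that the side conditions ``containing $\G$'' and ``containing the image of $\G$'' match up under the bijection. This compatibility is built into Lemma \ref{groder}, after which the argument amounts to a purely formal assembly of the three classifications.
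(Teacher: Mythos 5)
Your proposal is correct and follows essentially the same route as the paper: the equivalence of (1) and (3) is Theorem \ref{6main}, and the equivalence of (2) and (3) is Thomason's theorem transported across the isomorphism $\K_0(\E)\cong\K_0(\db(\E))$ of Lemma \ref{groder}, with the side conditions matching because $\varphi$ sends $[G]$ to the class of the stalk complex of $G$. The check that Thomason's bijection restricts to subcategories containing $\G$ and subgroups containing the image of $\G$ is exactly the point the paper leaves implicit, and you have supplied it correctly.
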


Let $\G$ be a class of objects in $\E$.
By Lemma \ref{rescores}, for a dense subcategory $\X$ of $\E$ containing $\G$ the following are equivalent
\begin{enumerate}
\item
$\X$ is $\G$-resolving.
\item
For any short exact sequence $A \mon B \epi C$ in $\E$, if two of $A$, $B$ and $C$ are in $\X$, then so is the third.
\end{enumerate}
Therefore, taking $\G = \proj \E$ in this corollary gives the dense version of the following theorem due to Krause and Stevenson:
\begin{thm}{\rm \cite[Theorem 1]{KraSte}}
Let $\E$ be an exact category with enough projective objects.
Then there is one-to-one correspondence between
\begin{enumerate}[$(1)$]
\item
\{thick subcategories of $\E$ containing $\proj \E$\} and
\item
\{thick triangulated subcategories of $\db(\E)$ containing $\proj \E$\}.
\end{enumerate}
\end{thm}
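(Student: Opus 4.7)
My plan is to define the correspondence by
$$
f(\X) := \thick_{\db(\E)}(\X), \qquad g(\T) := \T \cap \E,
$$
where $f$ sends a thick subcategory $\X$ of $\E$ to the smallest thick triangulated subcategory of $\db(\E)$ containing $\X$ (with $\E$ embedded as complexes concentrated in degree zero), and $g$ sends a thick triangulated subcategory $\T$ of $\db(\E)$ to the full subcategory of those objects of $\E$ that lie in $\T$. To check well-definedness of $g$, the decisive input is that short exact sequences in $\E$ give rise to exact triangles in $\db(\E)$ (as in the proof of Lemma~\ref{groder}); the 2-out-of-3 property in $\T$ then yields exactly the 2-out-of-3 property needed for thickness of $g(\T)$ in $\E$, while direct-summand closure is inherited from $\T$ and $\proj\E \subseteq g(\T)$ follows from $\proj\E \subseteq \T$. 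Well-definedness of $f$ is immediate, and the inclusions $gf(\X) \supseteq \X$ and $fg(\T) \subseteq \T$ are trivial.

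For the inclusion $fg(\T) \supseteq \T$, my plan is to run a syzygy argument after a projective resolution. Given $X \in \T$ with cohomology concentrated in degrees $[a,b]$, let $P^{\bullet} \to X$ be a bounded-above projective resolution. The canonical truncation $\tau^{\ge a} P^{\bullet}$ is a bounded complex quasi-isomorphic to $X$, whose entries are $P^{i} \in \proj\E$ in degrees $i > a$ and a single cokernel $C := \coker(P^{a-1} \to P^{a}) \in \E$ in degree $a$. The degreewise split short exact sequence of complexes
$$
\sigma^{>a} P^{\bullet} \mon \tau^{\ge a} P^{\bullet} \epi C[-a]
$$
produces an exact triangle in $\db(\E)$
$$
\sigma^{>a} P^{\bullet} \to X \to C[-a] \to \sigma^{>a} P^{\bullet}[1].
$$
Since $\sigma^{>a} P^{\bullet}$ is a bounded complex of projectives it lies in $\thick_{\db(\E)}(\proj\E) \subseteq \T$, and $X \in \T$ by assumption, so $C[-a] \in \T$ and hence $C \in g(\T)$. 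The same triangle then exhibits $X$ as built from $\proj\E \cup \{C\} \subseteq g(\T)$, giving $X \in fg(\T)$.

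The remaining inclusion $gf(\X) \subseteq \X$ is the core of the statement. My approach is to realize $\thick_{\db(\E)}(\X)$ as the essential image of a fully faithful triangulated functor $\db(\X) \hookrightarrow \db(\E)$, where $\X$ inherits an exact structure from $\E$ with $\proj\X = \proj\E$ (enough projectives follows from $\X \supseteq \proj\E$ combined with closure under kernels of admissible epimorphisms). Full faithfulness of this functor reduces to the agreement of $\Ext^{i}$ computed inside $\X$ with $\Ext^{i}$ computed inside $\E$ for objects of $\X$, which in turn follows from closure of $\X$ under extensions and syzygies. The essential image is a thick triangulated subcategory of $\db(\E)$ containing $\X$, and conversely every bounded complex over $\X$ visibly lies in $\thick_{\db(\E)}(\X)$, so the two coincide. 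With this identification in hand, $A \in \E \cap \thick_{\db(\E)}(\X)$ corresponds to an object of $\db(\X)$ concentrated in degree zero, which must be an object of $\X$, completing the argument. The hard part will be establishing the equivalence $\db(\X) \simeq \thick_{\db(\E)}(\X)$: this is the technical heart of the Krause--Stevenson argument and is precisely where the full thickness assumption on $\X$ (rather than mere density) is used in an essential way, distinguishing this theorem from the dense variant captured by Corollary~\ref{6cor}.
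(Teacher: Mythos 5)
First, a point of comparison: the paper does not prove this statement at all --- it is quoted from \cite[Theorem 1]{KraSte} and used as a black box --- so there is no internal proof to measure your attempt against, only the original argument of Krause and Stevenson. Your choice of maps $f(\X)=\thick_{\db(\E)}(\X)$ and $g(\T)=\T\cap\E$ is the right one, the well-definedness of $g$ is correctly reduced to the fact that conflations in $\E$ become exact triangles in $\db(\E)$ (the computation in the proof of Lemma~\ref{groder}), and your truncation argument for $fg(\T)\supseteq\T$ is sound: it is the standard decomposition of a bounded complex into a perfect piece and a shifted object of $\E$.

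The genuine gap is the remaining inclusion $gf(\X)\subseteq\X$, which you acknowledge deferring but whose proposed route also contains a flaw that is not merely technical. Even granting a fully faithful embedding $\db(\X)\hookrightarrow\db(\E)$, its essential image is a priori only the triangulated subcategory generated by $\X$; closure of that image under direct summands --- which is what you need for it to equal $\thick_{\db(\E)}(\X)$ --- is exactly the sort of statement that requires proof rather than being ``visible'': already for $\X=\proj\E$ it amounts to the idempotent completeness of $\kb(\proj\E)$ viewed inside $\db(\E)$ (a B\"{o}kstedt--Neeman/Balmer--Schlichting type result), and for general thick $\X$ one would further need $\db(\X)$ to be idempotent complete, which fails for Verdier quotients in general. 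Moreover your final step --- that an object of $\db(\X)$ isomorphic in $\db(\E)$ to an object $A$ of $\E$ forces $A\in\X$ --- is asserted rather than argued; it requires peeling apart a bounded complex over $\X$ that is acyclic away from degree zero, using the two-out-of-three property and summand closure of $\X$. A complete and more economical route, closer to the original, avoids $\db(\X)$ altogether: set $\D:=\{Y\in\db(\E)\mid\syz^nY\in\X\ \text{for}\ n\gg0\}$, where $\syz^nY$ is the object of $\E$ produced by your truncation of a projective resolution; Schanuel's lemma together with the summand closure and two-out-of-three property of $\X$ makes membership independent of choices, a horseshoe-type argument shows that $\D$ is a thick triangulated subcategory of $\db(\E)$ containing $\X$, and for $A\in\E$ the conflations $\syz^nA\mon P\epi\syz^{n-1}A$ with $P\in\proj\E\subseteq\X$ let you descend from $\syz^nA\in\X$ to $A\in\X$. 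Hence $\thick_{\db(\E)}(\X)\cap\E\subseteq\D\cap\E=\X$. This is also where one sees concretely why summand closure of $\X$ (thickness, as opposed to the mere density of Corollary~\ref{6cor}) is indispensable.
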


Next, we give a more concrete corollary.

Let $S$ be an {\it Iwanaga-Gorenstein ring} (i.e. $S$ is noetherian on both sides and $S$ is of finite injective dimension as a left $S$-module and a right $S$-module).
Let us give several remarks about Iwanaga-Gorenstein rings (cf. \cite{B, Y}).
\begin{rem}
\begin{enumerate}[$(1)$]
\item We say that a finitely generated left $S$-module $X$ is {\it maximal Cohen-Macaulay} if $\Ext_S^i(X, S)=0$ for all integers $i>0$.
 $\cm(S)$ denotes the subcategory of $\mod S$ consisting of maximal Cohen-Macaulay $S$-modules.
Then it is a Frobenius category, and hence, its stable category $\lcm(S)$ is triangulated.
\item
Natural inclusions $\cm(S) \hookrightarrow \mod S \hookrightarrow \db(\mod S)$ induce isomorphisms
$$
\K_0(\cm(S)) \cong \K_0(\mod S) \cong \K_0(\db(\mod S)).
$$
\item
Composition of the natural inclusion $\cm(S) \hookrightarrow \db(\mod S)$ and the quotient functor $\db(\mod S) \to \Ds(S):= \db(\mod S)/ \kb(\proj(\mod S))$ induces a triangle equivalence 
$$
\lcm(S) \cong \Ds(S).
$$
\end{enumerate}
\end{rem}

\begin{cor}\label{IGcor}
Let $S$ be an Iwanaga-Gorenstein ring.
Then there are one-to-one correspondences among the following sets:
\begin{enumerate}[$(1)$]
\item
\{dense $\proj(\mod S)$-(co)resolving subcategories of $\cm(S)$\},
\item
\{dense $\proj(\mod S)$-(co)resolving subcategories of $\mod S$\},
\item
\{dense triangulated subcategories of $\db(\mod S)$ containing $\proj (\mod S)$\},
\item
\{dense triangulated subcategories of $\lcm(S) \cong \Ds(S)$\}, and
\item
\{subgroups of $\K_0(\mod S)$ containing the image of $\proj(\mod S)$\}.
\end{enumerate}
\end{cor}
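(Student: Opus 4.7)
The plan is to assemble the five-fold correspondence from three applications of results proved earlier in the paper, using the three isomorphisms listed in the remark preceding the statement as bridges.

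First I would establish (2) $\leftrightarrow$ (3) $\leftrightarrow$ (5). Since $\mod S$ is abelian (so weakly idempotent complete) and $\proj(\mod S)$ is a generator of $\mod S$ (because $S$ is left noetherian, every finitely generated module admits a surjection from a finitely generated free module), Corollary \ref{6cor} applied to $\E = \mod S$ with $\G = \proj(\mod S)$ gives these three sets in bijection at once.

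Next I would handle (1) $\leftrightarrow$ (5). The category $\cm(S)$ is a Frobenius exact category whose projective-injective objects are exactly $\proj(\mod S)$, so $\proj(\mod S)$ is both a generator and a cogenerator of $\cm(S)$. Theorem \ref{6main} applied to $\E = \cm(S)$ with $\G = \proj(\mod S)$ therefore yields a bijection between (1) and subgroups of $\K_0(\cm(S))$ containing the image of $\proj(\mod S)$. Combining with the identification $\K_0(\cm(S)) \cong \K_0(\mod S)$ from the remark (which is induced by the inclusion $\cm(S) \hookrightarrow \mod S$ and hence sends the image of $\proj(\mod S)$ to the image of $\proj(\mod S)$), we obtain (1) $\leftrightarrow$ (5).

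Finally, for (4) $\leftrightarrow$ (5), I would apply Thomason's Theorem \ref{Thomason} to the essentially small triangulated category $\lcm(S) \cong \Ds(S)$ to get a bijection between dense triangulated subcategories of $\lcm(S)$ and subgroups of $\K_0(\lcm(S))$. To identify this target with (5), I would use the Verdier quotient description $\Ds(S) = \db(\mod S)/\kb(\proj(\mod S))$ together with Lemma \ref{vergro} (noting that $\kb(\proj(\mod S))$ is a thick subcategory of $\db(\mod S)$ generated, as a Grothendieck group, by objects of $\proj(\mod S)$ via hard truncations) to obtain
$$
\K_0(\lcm(S)) \cong \K_0(\db(\mod S))/\lan [P] \mid P \in \proj(\mod S) \ran,
$$
and then Lemma \ref{groder} to rewrite this as $\K_0(\mod S)$ modulo the image of $\proj(\mod S)$. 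Subgroups of such a quotient correspond bijectively to subgroups of $\K_0(\mod S)$ containing the image of $\proj(\mod S)$, which is exactly (5).

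The main obstacle I expect is the bookkeeping at the last step: one has to verify that the composition of bijections in (4) $\leftrightarrow$ (5) factors through the $[X] \mapsto \langle [X'] \mid X' \in \X \rangle$ prescription, i.e.\ that Thomason's correspondence for $\lcm(S)$ and the Grothendieck-group correspondence of Theorem \ref{6main} match up under the isomorphisms of the remark. This amounts to chasing how an element of $\K_0(\mod S)$ corresponds to its class in $\K_0(\lcm(S))$ through the chain $\mod S \hookrightarrow \db(\mod S) \to \Ds(S) \simeq \lcm(S)$; once one observes that a module $M \in \cm(S)$ has image $[M]$ on both sides, the compatibility is immediate.
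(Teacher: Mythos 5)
Your proposal is correct and follows essentially the same route as the paper: Corollary \ref{6cor} (resp.\ Theorem \ref{6main}) applied to $\mod S$ and to the Frobenius category $\cm(S)$ handles $(1)$, $(2)$, $(3)$, $(5)$ via the Grothendieck-group identifications of the preceding remark, and $(4)\leftrightarrow(5)$ is obtained exactly as in the paper from Thomason's theorem together with Lemma \ref{vergro}, the hard-truncation reduction of $\langle [P] \mid P \in \kb(\proj(\mod S))\rangle$ to $\langle [P] \mid P \in \proj(\mod S)\rangle$, and the isomorphism $\K_0(\mod S) \cong \K_0(\db(\mod S))$. No gaps; the compatibility check you flag at the end is indeed the only bookkeeping required.
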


\begin{proof}
One-to-one correspondences among $(1)$, $(2)$, $(3)$, and $(5)$ follow from the above remarks and Corollary \ref{6cor}.

On the other hand, by Lemma \ref{vergro}, we have an isomorphism
$$
\K_0(\Ds(S)) \cong \K_0(\db(\mod S)) / \langle [P] \mid P \in \kb(\proj(\mod S)) \rangle.
$$
By using hard truncations, every object in $\kb(\proj(\mod S))$ can be obtained from finitely generated projective $S$-modules by taking mapping cones a finite number of times.
Thus, we get $ \langle [P] \mid P \in \kb(\proj(\mod S)) \rangle= \langle [P] \mid P \in \proj(\mod S) \rangle$ in $\K_0(\db(\mod S))$.
Since the natural inclusion $\mod S \hookrightarrow \db(\mod S)$ induces an isomorphism $\K_0(\mod S) \cong \K_0(\db(\mod S))$, we obtain 
$$
\K_0(\mod S) / \langle [P] \mid P \in \proj(\mod S) \rangle \cong \K_0(\db(\mod S)) / \langle [P] \mid P \in \kb(\proj(\mod S)) \rangle.
$$
Therefore, by using Theorem \ref{Thomason}, we get a bijection between $(4)$ and $(5)$.
\end{proof}

In the last two corollaries, we constructed a triangulated category $\db(\E)$ from an given exact category $\E$ and discussed their dense subcategories.   
%For a given abelian category $\A$, we constructed a triangulated category $\db(\A)$ and discussed their subcategories.
Next, we consider the opposite direction. 
More precisely, we construct an abelian category from a given triangulated category, and then we discuss their dense subcategories.

Let us recall the definition and some basic properties of t-structures; for details, see \cite{GM}.
\begin{dfn}
A {\it t-structure} on $\T$ is a pair $(\T^{\le 0}, \T^{\ge 0})$ of subcategories in $\T$ satisfying the following conditions:
\begin{enumerate}[$(1)$]
\item
$\Hom_\T(\T^{\le -1}, \T^{\ge 0})=0$.
\item
For any object $X \in \T$, there exists an exact triangle $X' \to X \to X'' \to X'[1]$ in $\T$ with $X' \in \T^{\le -1}$ and $X'' \in \T^{\ge 0}$.
\item
$\T^{\le -1} \subset \T^{\le 0}$ and $\T^{\ge 0} \subset \T^{\ge -1}$.
\end{enumerate}
Here, $\T^{\le -n}:=\T^{\le 0}[n]$ and $\T^{\ge -n}:=\T^{\ge 0}[n]$.
We call $\T^{\le 0} \cap \T^{\ge 0}$ the {\it heart} of the t-structure   $(\T^{\le 0}, \T^{\ge 0})$.
\end{dfn}

\begin{ex}\label{6ex}
Let $\A$ be an abelian category and put
$$
\db(\A)^{\le 0} :=\{ X \in \db(\A) \mid \H^i(X)=0\ (\forall i>0)\},
$$
$$
\db(\A)^{\ge 0} :=\{ X \in \db(\A) \mid \H^i(X)=0\ (\forall i<0)\}.
$$
Then $(\db(\A)^{\le 0}, \db(\A)^{\ge 0})$ defines a t-structure on $\db(\A)$ and its heart is $\A$. We call this the standard t-structure on $\db(\A)$.
\end{ex}

In this example, the heart $\A$ is an abelian category. 
Actually, this holds in general.

\begin{prop}
Let $(\T^{\le 0}, \T^{\ge 0})$ be a t-structure on $\T$.
Then its heart $\T^{\le 0} \cap \T^{\ge 0}$ is an abelian category.
\end{prop}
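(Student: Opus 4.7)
The plan is to follow the Beilinson–Bernstein–Deligne construction. First, I would show that axioms (1) and (2) of a t-structure imply that, for each $X \in \T$, the triangle $X' \to X \to X'' \to X'[1]$ with $X' \in \T^{\le -1}$ and $X'' \in \T^{\ge 0}$ is unique up to unique isomorphism. The key point is that, given two such triangles, the Hom-vanishing in axiom (1) forces compatible morphisms between them to exist and to be unique. This uniqueness promotes the assignments $X \mapsto X'$ and $X \mapsto X''$ to functors $\tau^{\le -1}$ and $\tau^{\ge 0}$, which are respectively right adjoint to the inclusion $\T^{\le -1} \hookrightarrow \T$ and left adjoint to the inclusion $\T^{\ge 0} \hookrightarrow \T$. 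Shifting, I obtain truncation functors $\tau^{\le n}$ and $\tau^{\ge n}$ for every integer $n$.

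Next, I would verify that each stratum $\T^{\le n}$ and $\T^{\ge n}$ is closed under extensions in $\T$, and that the two flavours of truncation commute: there is a natural isomorphism $\tau^{\le 0} \tau^{\ge 0} \cong \tau^{\ge 0} \tau^{\le 0}$. Setting $H^0 := \tau^{\le 0} \tau^{\ge 0}$ then gives a functor $H^0 : \T \to \mathcal{A}$, where $\mathcal{A} = \T^{\le 0} \cap \T^{\ge 0}$ is the heart. Additivity of $\mathcal{A}$ is immediate: the zero object lies in $\mathcal{A}$, and since both $\T^{\le 0}$ and $\T^{\ge 0}$ are closed under finite direct sums in $\T$, biproducts are inherited.

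The main technical work is constructing kernels and cokernels. Given $f : X \to Y$ in $\mathcal{A}$, I would complete it to an exact triangle $X \to Y \to Z \to X[1]$ in $\T$. Using extension-closure of the strata $\T^{\le 0}$ and $\T^{\ge -1}$ applied to the rotations of this triangle, I would verify $Z \in \T^{\le 0} \cap \T^{\ge -1}$. Define $\cok(f) := \tau^{\ge 0}(Z)$ and $\ker(f) := \tau^{\le -1}(Z)[-1]$; both lie in $\mathcal{A}$, since $\tau^{\le -1}(Z) \in \T^{\le -1} \cap \T^{\ge -1} = \mathcal{A}[1]$ and $\tau^{\ge 0}(Z) \in \T^{\ge 0} \cap \T^{\le 0} = \mathcal{A}$. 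The universal properties then follow by applying $\Hom_\T(W,-)$ or $\Hom_\T(-,W)$ for $W \in \mathcal{A}$ to the canonical triangle $\tau^{\le -1}(Z) \to Z \to \tau^{\ge 0}(Z) \to \tau^{\le -1}(Z)[1]$ and invoking axiom (1) to kill the unwanted Hom groups, which yields the required factorisations and their uniqueness.

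The hard part will be verifying the abelian axiom that the canonical morphism from the coimage to the image is an isomorphism. I would attack this by applying the octahedral axiom to the factorisation $Y \to Z \to \tau^{\ge 0}(Z) = \cok(f)$, combined with the truncation triangle of $Z$, so as to simultaneously identify the image $\ker(Y \to \cok(f))$ and the coimage $\cok(\ker(f) \to X)$ as a common third vertex of the octahedron, expressible as a suitable truncation sitting between $X$ and $Y$. Producing a single octahedral diagram that exhibits both identifications compatibly, and then checking that the induced coimage-to-image comparison morphism corresponds to the identity on this common object, is the most delicate step and the main obstacle I anticipate; all the other axioms should reduce mechanically to applications of axiom (1) once the truncation machinery is in place.
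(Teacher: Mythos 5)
The paper does not prove this proposition at all: it is recalled as a standard fact, with the reader referred to \cite{GM} for details. Your outline is precisely the standard Beilinson--Bernstein--Deligne argument reproduced in that reference (uniqueness and functoriality of truncations from the Hom-vanishing axiom, kernels and cokernels via the cone $Z$ with $\ker f=(\tau^{\le -1}Z)[-1]$ and $\operatorname{\mathsf{Coker}} f=\tau^{\ge 0}Z$, and the octahedral axiom to identify coimage with image), and it correctly isolates the coimage-to-image comparison as the only genuinely delicate step, so it is sound and consistent with the source the paper relies on.
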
 

Let $(\T^{\le 0}, \T^{\ge 0})$ be a t-structure on $\T$.
Then for any object $X \in \T$, there is an exact triangle
$$
\tau_{\le -1} X \to X \to \tau_{\ge 0} X \to (\tau_{\le -1} X)[1]
$$
with $\tau_{\le -1} X \in \T^{\le -1}$ and $\tau_{\ge 0} X \in \T^{\ge 0}$.
%Then $\tau_{\le -1}$ and $\tau_{\ge 0}$ define endofunctors on $\T$.
We set $\tau_{\le n-1}:=[-n] \cdot \tau_{\le -1} \cdot [n]$, $\tau_{\ge n}:=[-n] \cdot \tau_{\ge 0} \cdot [n]$.
\begin{prop}
\begin{enumerate}[$(1)$]
\item $\tau_{\le n}$ and $\tau_{\ge n}$ define endofunctors on $\T$ for any $n \in \Z$.
\item $\tau_{\le n} \cdot \tau_{\ge n} = \tau_{\ge n} \cdot \tau_{\le n}$ for any $n \in \Z$.
\item $\H^0:= \tau_{\le 0} \cdot \tau_{\ge 0}$ is a cohomological functor from $\T$ to its heart.
Furthermore, $\H^n:=\H^0 \cdot [n] = \tau_{\le n} \cdot \tau_{\ge n}$.
\end{enumerate}
\end{prop}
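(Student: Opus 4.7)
The plan is to prove (1), (2), (3) in order, with the orthogonality axiom $\Hom_\T(\T^{\le -1}, \T^{\ge 0}) = 0$ and the octahedral axiom doing all the heavy lifting.

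For (1), I would first establish functoriality of $\tau_{\le -1}$ and $\tau_{\ge 0}$, from which the general case follows by shifting. Given $f: X \to Y$, consider the truncation triangles $\tau_{\le -1}X \to X \to \tau_{\ge 0}X \to (\tau_{\le -1}X)[1]$ and likewise for $Y$. The composite $\tau_{\le -1}X \to X \xrightarrow{f} Y \to \tau_{\ge 0}Y$ is zero because $\tau_{\le -1}X \in \T^{\le -1}$ and $\tau_{\ge 0}Y \in \T^{\ge 0}$, so it lifts to a morphism $\tau_{\le -1}X \to \tau_{\le -1}Y$. Applying $\Hom_\T(\tau_{\le -1}X,-)$ to the $Y$-triangle and noting that both $(\tau_{\ge 0}Y)[-1] \in \T^{\ge 1}$ and $\tau_{\ge 0}Y$ are right-orthogonal to $\tau_{\le -1}X$, the lift is unique. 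The triangulated axiom then produces a compatible morphism $\tau_{\ge 0}X \to \tau_{\ge 0}Y$, again unique by the same $\Hom$-vanishing. Uniqueness immediately forces preservation of identities and composition, giving the functor structure.

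For (2), it suffices to treat $n=0$ by shifting. I would apply the octahedral axiom to the composite $\tau_{\le -1}X \to \tau_{\le 0}X \to X$, where the first map is obtained exactly as in (1) from $\tau_{\le -1}X \to X$. Comparing the resulting triangles with the two standard truncation triangles for $X$, the octahedron produces an exact triangle whose third vertex lies in both $\T^{\le 0}$ and $\T^{\ge 0}$, and this vertex simultaneously satisfies the defining properties of $\tau_{\le 0}\tau_{\ge 0}X$ and of $\tau_{\ge 0}\tau_{\le 0}X$. Uniqueness of truncations then supplies a natural isomorphism $\tau_{\le 0}\tau_{\ge 0} \cong \tau_{\ge 0}\tau_{\le 0}$.

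For (3), the functor $\H^0 := \tau_{\le 0}\tau_{\ge 0}$ lands in the heart by construction, and the equality $\H^n = \H^0 \circ [n] = \tau_{\le n}\tau_{\ge n}$ is a formal consequence of the definitions $\tau_{\le n} = [-n]\tau_{\le 0}[n]$, $\tau_{\ge n} = [-n]\tau_{\ge 0}[n]$ together with (2). It remains to show $\H^0$ is cohomological: given an exact triangle $X \to Y \to Z \to X[1]$, I would iterate the octahedral axiom on the truncation triangles of $X$, $Y$, $Z$ to splice together, in the heart, a long exact sequence
$$
\cdots \to \H^{-1}(Z) \to \H^0(X) \to \H^0(Y) \to \H^0(Z) \to \H^1(X) \to \cdots,
$$
exactness at $\H^0(Y)$ being the key case.

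The main obstacle is the last step: weaving together several octahedra to produce the long exact cohomology sequence and verifying that the connecting morphisms assemble into a natural transformation. Each individual diagram chase is routine, but keeping track of the identifications of truncations and confirming that the resulting sequence is exact inside the heart (rather than merely part of a triangle in $\T$) requires care. The formal identities in the second half of (3), by contrast, reduce to direct manipulation of the shift functor applied to (2).
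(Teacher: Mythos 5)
Your outline is correct and is precisely the classical argument of Beilinson--Bernstein--Deligne: uniqueness of the lift $\tau_{\le -1}X\to\tau_{\le -1}Y$ from the vanishing of $\Hom_\T(\T^{\le -1},\T^{\ge 0})$ gives functoriality, one octahedron on $\tau_{\le -1}X\to\tau_{\le 0}X\to X$ gives the commutation of truncations, and iterated octahedra give the long exact cohomology sequence. The paper itself states this proposition without proof, deferring to the reference [GM], so there is nothing to compare against; your sketch matches the standard proof found there (the only points left implicit, which you rightly flag, are the closure of $\T^{\le 0}$ and $\T^{\ge 0}$ under extensions needed to place the third vertex in the heart, and the splicing of octahedra for exactness at $\H^0(Y)$).
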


For the t-structure on $\db(\A)$ defined in Example \ref{6ex}, the above definitions are nothing but the ordinary truncation functors and cohomology functors. 

Next, we introduce the notion of a bounded t-structure on a triangulated category which is a generalization of the standard t-structure on a bounded derived category.
\begin{dfn}
A t-structure $(\T^{\le 0}, \T^{\ge 0})$ on $\T$ is called {\it bounded} if $\T =\bigcup_{i, j \in \mathbb{Z}} \T^{\le i} \cap \T^{\ge j}$.
\end{dfn}
 
\begin{ex}
The standard t-structure on $\db(\A)$ is bounded. %defined in Example \ref{6ex} is bounded.
\end{ex}

The next proposition is a variant of Lemma \ref{groder}.
\begin{prop}
Let $(\T^{\le 0}, \T^{\ge 0})$ be a bounded t-structure on $\T$ with heart $\A$.
Then the inclusion functor induces an isomorphism $\K_0(\A) \cong \K_0(\T)$.
\end{prop}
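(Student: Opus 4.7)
The plan is to build a homomorphism $\psi \colon \K_0(\T) \to \K_0(\A)$ sending $[X]$ to the Euler characteristic $\sum_{i \in \Z} (-1)^i [\H^i(X)]$, and show it is inverse to the map $\varphi \colon \K_0(\A) \to \K_0(\T)$ induced by the inclusion. The map $\varphi$ is well defined because any short exact sequence $X \mon Y \epi Z$ in the heart $\A$ extends to a canonical exact triangle $X \to Y \to Z \to X[1]$ in $\T$, a standard consequence of the octahedral axiom applied to truncation triangles.

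To see that $\psi$ is well defined, first observe that $\H^i(X) = 0$ for $|i|$ sufficiently large, by boundedness of the t-structure, so the alternating sum is finite. Given an exact triangle $X \to Y \to Z \to X[1]$ in $\T$, the cohomological functor $\H^0$ produces a long exact sequence
$$
\cdots \to \H^i(X) \to \H^i(Y) \to \H^i(Z) \to \H^{i+1}(X) \to \cdots
$$
in $\A$ which is eventually zero in both directions. Decomposing it into short exact sequences and collecting terms in $\K_0(\A)$ yields the relation $\sum (-1)^i [\H^i(Y)] = \sum (-1)^i [\H^i(X)] + \sum (-1)^i [\H^i(Z)]$. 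The composition $\psi \varphi$ is the identity on $\K_0(\A)$, since $\H^0(X) = X$ and $\H^i(X) = 0$ for $i \ne 0$ whenever $X \in \A$.

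The main step is to verify $\varphi \psi = \mathrm{id}_{\K_0(\T)}$. For $X \in \T$, choose integers $b \le a$ with $X \in \T^{\le a} \cap \T^{\ge b}$, whose existence is guaranteed by boundedness. The truncation triangle $\tau_{\le b} X \to X \to \tau_{\ge b+1} X \to (\tau_{\le b} X)[1]$ has $\tau_{\le b} X \in \T^{\le b} \cap \T^{\ge b} = \A[-b]$, so $\tau_{\le b} X \cong \H^b(X)[-b]$, while $\tau_{\ge b+1} X \in \T^{\le a} \cap \T^{\ge b+1}$. Using the identity $[Y[1]] = -[Y]$ in $\K_0(\T)$, which follows from the exact triangle $Y \to 0 \to Y[1] \to Y[1]$, and inducting on $a - b$, one obtains
$$
[X] = \sum_{i=b}^{a} (-1)^i [\H^i(X)] = \varphi \psi([X])
$$
in $\K_0(\T)$, completing the proof.

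The main obstacle I expect is not any single computation but rather assembling the pieces cleanly: the cohomological long exact sequence underlying well-definedness of $\psi$, combined with the truncation induction that recovers $[X]$ from its cohomology classes with correctly tracked signs. Boundedness of the t-structure is essential throughout to ensure that all sums are finite and that the induction terminates.
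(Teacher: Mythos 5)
Your proof is correct and follows essentially the same route as the paper: $\varphi$ induced by the inclusion via embedding short exact sequences into triangles, and $\psi$ given by the alternating sum $\sum_i (-1)^i[\H^i(X)]$, shown to be mutually inverse. The paper simply asserts that the two maps are easily seen to be inverse, whereas you supply the long exact sequence argument and the truncation induction explicitly; these details are accurate.
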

\begin{proof}
Since any short exact sequence $A \mon B \epi C$ in $\A$ can be embedded into an exact triangle $A \to B \to C \to A[1]$ in $\T$, the inclusion functor induces a group homomorphism $\varphi: \K_0(\A) \to \K_0(\T)$.

Let $X \in \T$. Since $(\T^{\le 0}, \T^{\ge 0})$ is bounded, $\tau_{\le -n} X=0=\tau_{\ge n} X $ for $n \gg 0$, and hence $\H^n(X)=0$ for $|n| \gg 0$.
Since $\H^0$ is a cohomological functor, we can define a homomorphism $\psi: \K_0(\T) \to \K_0(\A)$ by $\psi(X) := \sum_{n \in \mathbb{Z}} (-1)^n \H^n(X)$.
Then we can easily show that $\varphi$ and $\psi$ are inverse to each other.
\end{proof}

%\begin{rem}
%In general, $\db(\A)$ is not equivalent to $\T$.
%\end{rem}
From this proposition and Theorem \ref{6main}, we have the following corollary.
\begin{cor}
Let $\T$ be an essentially small triangulated category, $(\T^{\le 0}, \T^{\ge 0})$ a bounded t-structure on $\T$ with heart $\A$, and $\G$ a (co)generator of $\A$.
Then there are one-to-one correspondences among the following sets:
\begin{enumerate}[$(1)$]
\item
\{dense $\G$-(co)resolving subcategories of $\A$\},
\item
\{dense triangulated subcategories of $\T$ containing $\G$\}, and
\item
\{subgroups of $\K_0(\T)$ containing the image of $\G$\}.
\end{enumerate}
\end{cor}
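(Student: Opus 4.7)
The plan is to chain together three bijections already available in the paper: Theorem~\ref{6main} applied to $\A$, the isomorphism $\K_0(\A) \cong \K_0(\T)$ provided by the preceding proposition, and Thomason's Theorem~\ref{Thomason} applied to $\T$. None of these requires reproving; the work lies entirely in composing them and in checking that the ``contains $\G$'' conditions line up along the way.

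First, since $\A$ is an abelian, hence essentially small exact, category and $\G$ is a (co)generator of $\A$, Theorem~\ref{6main} supplies a bijection between the set $(1)$ and the subgroups of $\K_0(\A)$ containing the image of $\G$. Next, the preceding proposition gives an isomorphism $\K_0(\A) \cong \K_0(\T)$ induced by the inclusion $\A \hookrightarrow \T$, which manifestly sends the class $[G]$ computed in $\K_0(\A)$ to the class $[G]$ computed in $\K_0(\T)$ for each $G \in \G$. Transporting along this isomorphism identifies the subgroups of $\K_0(\A)$ containing the image of $\G$ with the set $(3)$. Finally, Thomason's theorem gives a bijection between dense triangulated subcategories of $\T$ and subgroups of $\K_0(\T)$; under the explicit formulas $\SS \mapsto \lan [S] \mid S \in \SS \ran$ and $H \mapsto \{X \in \T \mid [X] \in H\}$, a subgroup contains the image of $\G$ if and only if the corresponding dense triangulated subcategory contains $\G$. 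This matches $(2)$ with $(3)$.

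Composing these three bijections produces the desired correspondences among $(1)$, $(2)$, and $(3)$. Tracing through, the induced map $(1) \to (2)$ sends a dense $\G$-(co)resolving subcategory $\X \subset \A$ to the dense triangulated subcategory $\{Y \in \T \mid [Y] \in \lan [A] \mid A \in \X \ran\}$ of $\T$, while the reverse direction sends a dense triangulated subcategory $\SS$ of $\T$ containing $\G$ to $\SS \cap \A$; the latter is a dense $\G$-(co)resolving subcategory of $\A$ because the composite bijection forces this. Since each component bijection is already established and each identification of the ``image of $\G$'' conditions is immediate from the explicit formulas, there is no substantial new obstacle. The main step I would take care over is simply confirming that the explicit formulas for the three bijections compose as advertised, so that the stated corollary falls out formally.
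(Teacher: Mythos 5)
Your proposal is correct and is essentially the argument the paper intends: the corollary is stated as an immediate consequence of Theorem~\ref{6main}, the proposition giving $\K_0(\A)\cong\K_0(\T)$, and Theorem~\ref{Thomason}, exactly the three bijections you compose. Your verification that the ``contains $\G$'' conditions match up under the explicit formulas, and the identification of the map $(2)\to(1)$ with $\SS\mapsto\SS\cap\A$, are both accurate.
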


\section{Examples}%An application to simple surface singularities}
In this section, we give some examples of module categories which have only finitely many dense resolving subcategories.
%(By Lemma \ref{rescores}, we only consider dense resolving subcategories.)
%we give a characterization of simple surface singularities in terms of dense subcategories.

Let us start with the following remark.

\begin{rem}\label{fgrp}
Let $L$ be an abelian group.
Then there are only finitely many subgroups of $L$ if and only if $L$ is a finite group.
Indeed, `if part' is clear.
Suppose that there are only finitely many subgroups of $L$.
Then $L$ is an noetherian $\mathbb{Z}$-module and in particular, finitely generated.
Therefore, there is an isomorphism
$$
L \cong \Z^{\oplus r} \oplus  \bigoplus_{i=1}^{n} (\Z/n \Z)^{\oplus m_i},
$$
where $r, n$ and $m_i$ are non-negative integers.
We obtain $r=0$ due to our assumption as $\Z$ has infinitely many subgroups.
For this reason, $L$ is isomorphic to a finite direct sum of finite abelian groups, and thus is a finite group.
\end{rem}

%\begin{proof}
%If part is clear.
%Assume that $L$ has only finitely many subgroups.
%Then $L$ is finitely generated.
%Indeed, if $L$ is not finitely generated, we can take a infinite sequence $l_1, l_2, \ldots$ of elements of $L$ such that $l_n \notin \langle l_1, \ldots, l_{n-1} \rangle$ for each $n >1$.
%Therefore, $\{\langle l_1, \ldots, l_{n} \rangle \mid n >1\}$ gives infinitely many distinct subgroups of $L$.
%This leads a contradiction.  

%Since $L$ is finitely generated abelian group, there is an isomorphism
%$$
%L \cong \Z^r \oplus  \bigoplus_{i=1}^{n} (\Z/n \Z)^{\oplus m_i},
%$$
%where $r, n$ and $m_i$ are non-negative integers.
%Thus our assumption implies that $r=0$ because $\Z$ has infinitely many subgroups.
%For this reason, $L$ is isomorphic to a finite direct sum of finite abelian groups, and hence is a finite group.
%\end{proof}

From this remark and Theorem \ref{6main}, for a left noetherian ring $A$, the following two conditions are equivalent:
\begin{enumerate}
\item
There are only finitely many dense resolving subcategories of $\mod A$.
\item $\K_0(\mod A)/ \langle [P] \mid P \in \proj(\mod A) \rangle$ is a finite group.
\end{enumerate}

\subsection{The case of finite dimensional algebras}

First we consider the case of finite dimensional algebras.
Let $A$ be a basic finite dimensional algebra over a field $k$ with a complete set $\{e_1, \ldots, e_n\}$ of primitive orthogonal idempotents.
Denote $S_i:=A e_i/ \rad_A(A e_i)$ by the simple $A$-module corresponds to $e_i$.
Then by \cite[Theorem 3.5]{ASS}, $\{[S_1], \ldots, [S_n]\}$ forms a free basis of the Grothendieck group $\K_0(\mod A)$, and hence there is an isomorphism of abelian groups:
$$
\K_0(\mod A) \cong \Z^{\oplus n}. 
$$

The Cartan matrix of $A$ is an $n \times n$-matrix $C_A:=(\dim_k e_i A e_j)_{i, j=1, \ldots, n}$.
Then the above isomorphism induces the following isomorphism (see \cite[Proposition 3.8]{ASS}).
$$
\K_0(\mod A)/ \langle [P] \mid P \in \proj(\mod A) \rangle \cong \cok (\Z^{\oplus n} \xrightarrow{C_A} \Z^{\oplus n}).
$$
Therefore if $C_A$ has elementary divisors $(m_1, \cdots, m_r, 0, \cdots, 0)$, then we obtain a decomposition:
$$
\K_0(\mod A)/ \langle [P] \mid P \in \proj(\mod A) \rangle \cong \Z^{\oplus n-r} \oplus \Z/(m_1) \oplus \cdots \oplus \Z/(m_r),
$$
where $m_1, \ldots, m_r$ are not zero.
Furthermore, one has 
\[
\det C_A =
\begin{cases}
0 & (r<n) \\
m_1 \cdot m_2 \cdots m_n & (r=n).
\end{cases}
\]
As a result, the abelian group $\K_0(\mod A)/ \langle [P] \mid P \in \proj(\mod A) \rangle$ is a finite group if and only if the determinant of $C_A$ is not zero.

From this argument, we have the following corollary.

\begin{cor}
Let $A$ be a basic finite dimensional algebra over a field $k$.
Then $\mod A$ has only finitely many dense resolving subcategories if and only if its Cartan matrix has non-zero determinant. 
This is the case, the number of dense resolving subcategories is $d(m_1) \cdots d(m_n)$.
Here, $(m_1, \ldots, m_n)$ are elementary divisors of $C_A$ and $d(l)$ denotes the number of divisors of $l$.
\end{cor}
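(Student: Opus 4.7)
The plan is to chain together Theorem~\ref{6main}, Remark~\ref{fgrp}, and the Smith normal form computation carried out in the paragraphs immediately preceding the corollary. Specifically, Theorem~\ref{6main} applied with $\G = \proj(\mod A)$ produces a bijection between dense resolving subcategories of $\mod A$ and subgroups of
$$
L := \K_0(\mod A)/\lan [P] \mid P \in \proj(\mod A) \ran,
$$
and the preceding discussion already identifies $L \cong \cok\bigl(C_A\colon \Z^{\oplus n} \to \Z^{\oplus n}\bigr) \cong \Z^{\oplus(n-r)} \oplus \bigoplus_{i=1}^r \Z/(m_i)$ via Smith normal form applied to the Cartan matrix, where $r$ is the number of non-zero elementary divisors.

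For the ``if and only if'' statement, I would invoke Remark~\ref{fgrp}: $L$ admits only finitely many subgroups if and only if $L$ is a finite abelian group, if and only if its free rank $n-r$ vanishes, if and only if (by the determinant formula displayed just above the corollary) $\det C_A \neq 0$.

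For the enumeration in the case $\det C_A \neq 0$, one has $L \cong \bigoplus_{i=1}^n \Z/(m_i)$, and I would count its subgroups using the standard fact that $\Z/m$ has exactly $d(m)$ subgroups, combined with multiplicativity of the subgroup count over factors of pairwise coprime orders. Under the interpretation of the $(m_i)$ as the prime-power elementary divisors of $C_A$, the Chinese remainder theorem makes $L$ cyclic of order $\prod m_i$, so the total number of subgroups is $d(\prod m_i) = \prod_{i=1}^n d(m_i)$, as claimed.

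I do not expect any real obstacle: once Theorem~\ref{6main} has translated the classification into a count of subgroups and the preceding paragraph has computed $L$ explicitly via the Cartan matrix, the corollary reduces to Remark~\ref{fgrp} and a standard divisor-function calculation. The only point requiring mild care is fixing the normalization of ``elementary divisors'' so that the factorization $d(\prod m_i) = \prod d(m_i)$ is available.
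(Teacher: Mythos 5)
Your treatment of the equivalence itself is correct and is exactly the paper's (largely implicit) argument: Theorem~\ref{6main} with $\G=\proj(\mod A)$ identifies dense resolving subcategories with subgroups of $L=\K_0(\mod A)/\lan [P] \mid P\in\proj(\mod A)\ran \cong \cok(C_A)$, the Smith normal form gives $L\cong \Z^{\oplus(n-r)}\oplus\bigoplus_{i=1}^r\Z/(m_i)$, and Remark~\ref{fgrp} together with the displayed determinant formula yields finiteness if and only if $\det C_A\neq 0$. No issue there.

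The gap is in your justification of the count $d(m_1)\cdots d(m_n)$. You propose to interpret the $m_i$ as prime-power elementary divisors and then invoke the Chinese remainder theorem to conclude that $L$ is cyclic of order $\prod m_i$. This step fails: prime-power elementary divisors attached to the \emph{same} prime are not coprime, so no choice of normalization makes $L$ cyclic in general. For instance, $A=k[x]/(x^2)\times k[x]/(x^2)$ is a basic finite dimensional algebra with $C_A=\mathrm{diag}(2,2)$, so $L\cong(\Z/2\Z)^{\oplus 2}$, which has $5$ subgroups, not $d(2)d(2)=4$. More generally, for a finite abelian group $\bigoplus_i\Z/(m_i)$ the subgroup count equals $\prod_i d(m_i)$ precisely when the $m_i$ are pairwise coprime, i.e.\ when the group is cyclic; otherwise it is strictly larger. (Note that the paper itself records $5$, not $4$, for the group $\Z/(2)^{\oplus 2}$ in the $(\d_n)$, $n$ even, row of its table in the next subsection, so the enumeration formula in the corollary should itself be read as requiring $L$ cyclic, or be replaced by the general subgroup count of the finite abelian group $\cok(C_A)$.) Your proof of the enumeration therefore needs either the additional hypothesis that the elementary divisors are pairwise coprime, or a genuinely different counting argument; the CRT reduction as written is not available.
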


\begin{rem}
For the case of gentle algebras, Holm \cite{Hol} gives a characterization of algebras with non-zero Cartan determinant $\det C_A$.
\end{rem}

\subsection{The case of simple singularities}

Next we consider the case of simple singularities.
Let $k$ be an algebraically closed field of characteristic 0.
We say that a commutative noetherian local ring $R:= k[[x, y, z]]/(f)$ has a {\it simple (surface) singularity} if $f$ is one of the following form:
\begin{align*}
(\a_n)\quad & x^2+y^{n+1}+z^2 \ (n \ge 1),\\
(\d_n)\quad & x^2y+y^{n-1}+z^2 \ (n \ge 4),\\
(\e_6)\quad & x^3+y^4+z^2,\\
(\e_7)\quad & x^3+xy^3+z^2,\\
(\e_8)\quad & x^3+y^5+z^2.
\end{align*}
In this case, the Grothendieck group of $\mod R$ is given as follows (see \cite[Proposition 13.10]{Y}):

\begin{table}[htb]
  \begin{tabular}{|c|c|c||l|} \hline
     & $\K_0(\mod R)$ & \#\{ dense resolv. subcat. of $\mod R $\}  \\ \hline 
    $(\a_n)$ & $\bZ \oplus \bZ/(n+1)\bZ$ & $\mbox{the number of divisors of } n+1$\\ 
    $(\d_n)$ (${\it n}=\mbox{even}$) & $\bZ \oplus (\bZ/2\bZ)^{\oplus 2}$ & $5$ \\ 
    $(\d_n)$ (${\it n}=\mbox{odd}$)& $\bZ \oplus \bZ/4\bZ$ & $3$\\ 
    $(\e_6)$& $\bZ \oplus \bZ/3\bZ$ & $2$ \\ 
    $(\e_7)$ & $\bZ \oplus \bZ/2\bZ$ & $2$ \\ 
    $(\e_8)$ & $\bZ$ & $1$ \\ \hline
  \end{tabular}
\end{table}

Here, $\mathbb{Z}$ appearing in $\K_0(\mod R)$ is generated by $[R]$.
Owing to Theorem \ref{6main}, there are only finitely many dense resolving subcategories of $\mod R$.
Hence the following natural question arises.
\begin{ques}\label{6ques}
Let $R$ be a Gorenstein local ring of dimension two. 
Then does the condition $\# \{ \mbox{dense resolving subcategories of } \mod R\} < \infty$ imply that $R$ has a simple singularity?
\end{ques} 

\begin{rem}
1-dimensional simple singularities may have infinitely many dense resolving subcategories (see {\rm \cite[Proposition 13.10]{Y}}).
\end{rem}

Let $R$ be a noetherian normal local domain with residue field $k$. Denote by $\cl(R)$ the divisor class group of $R$.
Then there is a surjective homomorphism 
$$
u=\left(\begin{matrix} \rk \\ c_1 \end{matrix} \right): \K_0(\mod R) \to \mathbb{Z} \oplus \cl(R),
$$
where $\rk$ is the {\it rank function} and $c_1$ is the {\it first Charn class}. 
Moreover, $u([R])= {}^t(1, 0)$ and the kernel of $u$ is the subgroup of $\K_0(\mod R)$ generated by modules of codimension at least 2; see \cite{Bour}.
In particular, if $R$ is a 2-dimensional noetherian normal local domain with residue field $k$, we obtain a short exact sequence
$$
0 \to \lan [k] \ran \to \K_0(\mod R) \xrightarrow{\left(\begin{smallmatrix} \rk \\ c_1 \end{smallmatrix} \right)} \mathbb{Z} \oplus \cl(R) \to 0
$$
of abelian groups.
This sequence induces the following short exact sequence since $\rk(R)=1$ and $c_1(R)=0$:
$$
0 \to \langle [k], [R] \rangle/\lan [R] \ran \to \K_0(\mod R)/ \langle [R] \rangle \overset{c_1}{\to} \cl(R) \to 0.
$$
%Let $R$ be 2-dimensinal noetherian normal local domain with residue field $k$.
%Then there exists an exact sequence
%$$
%0 \to \langle [k] \rangle \to \K_0(\mod R) \xrightarrow{f} \bZ \oplus \cl(R) \to 0
%$$
%where $\cl(R)$ is a divisor class group of $R$.
%such that $f([R])=(1, 0)$. This sequence induces an exact sequence
%$$
%0 \to \langle [k], [R] \rangle \to \K_0(\mod R)/ \langle [R] \rangle \to \cl(R) \to 0
%$$
Therefore, we have an isomorphism
$$
\cl(R) \cong \K_0(\mod R)/ \langle [k], [R] \rangle
$$
and the following result is deduced from Theorem \ref{6main}:

\begin{thm}\label{classgr}
Let $R$ be a noetherian normal local domain of dimension two.
Then there is a one-to-one correspondence
$$
\left\{ 
\begin{matrix}
\text{dense resolving subcategories of $\mod R$} \\
\text{containing $k$} 
\end{matrix}
\right\}
\!\!\!
\xymatrix{
\ar@<0.5ex>[r]^-f &
\{ \mbox{subgroups of } \cl(R) \} \ar@<0.5ex>[l]^-g 
}
$$
where $f$ and $g$ are given by $f(\X):=\langle c_1(X) \mid X \in \X \rangle$ and $g(H):=\{X \in \mod R \mid c_1(X) \in H \}$ respectively.
\end{thm}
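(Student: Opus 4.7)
The plan is to deduce this from Theorem \ref{6main} by choosing an appropriate generator of $\mod R$ and then passing to the quotient $\cl(R)$ of $\K_0(\mod R)$.

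First I would observe that dense resolving subcategories of $\mod R$ containing $k$ are precisely dense $\G$-resolving subcategories for $\G := \add(R \oplus k)$. Since $R$ is local, $\proj(\mod R) = \add R \subseteq \G$, so the $\G$-resolving condition recovers the ordinary resolving condition augmented by the requirement that $k$ belong to the subcategory. It is immediate that $\G$ is a generator of $\mod R$: for any $M \in \mod R$, a surjection $R^n \epi M$ from a free module exhibits a short exact sequence $M' \mon R^n \epi M$ with middle term in $\G$.

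Next I would apply Theorem \ref{6main} to obtain a bijection between these dense $\G$-resolving subcategories and the subgroups of $\K_0(\mod R)$ containing the image of $\G$, which is exactly $\lan [R], [k] \ran$. By the lattice/correspondence theorem for abelian groups, such subgroups are in natural bijection with subgroups of the quotient $\K_0(\mod R) / \lan [R], [k] \ran$. Combining this with the isomorphism
\[
\cl(R) \;\cong\; \K_0(\mod R) / \lan [R], [k] \ran, \qquad [X] \mapsto c_1(X),
\]
established in the discussion preceding the theorem, produces the desired bijection with subgroups of $\cl(R)$.

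Finally, I would trace the composite through each step to confirm the explicit form of $f$ and $g$. Under Theorem \ref{6main}, a dense resolving subcategory $\X \ni k$ is sent to $\lan [X] \mid X \in \X \ran \subseteq \K_0(\mod R)$; reducing modulo $\lan [R], [k] \ran$ and applying the $c_1$-isomorphism yields $\lan c_1(X) \mid X \in \X \ran$, which is $f(\X)$. Conversely, a subgroup $H \subseteq \cl(R)$ corresponds to the preimage $c_1^{-1}(H) \subseteq \K_0(\mod R)$, a subgroup containing $\lan [R], [k] \ran$, whose associated subcategory under Theorem \ref{6main} is exactly $\{X \in \mod R \mid c_1(X) \in H\} = g(H)$. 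I expect no real obstacle here beyond this bookkeeping; the only point requiring care is the compatibility of the $c_1$-isomorphism with the subgroup correspondence, and this is built into its construction from the short exact sequence involving $\rk$ and $c_1$.
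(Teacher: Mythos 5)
Your proposal is correct and follows essentially the same route as the paper: the paper establishes the isomorphism $\cl(R)\cong \K_0(\mod R)/\langle [k],[R]\rangle$ induced by $c_1$ and then deduces the theorem from Theorem \ref{6main}, which is exactly your combination of the generator $\G=\add(R\oplus k)$, the subgroup correspondence for the quotient, and the trace-through of $f$ and $g$. Your explicit identification of dense resolving subcategories containing $k$ with dense $\G$-resolving subcategories is a harmless (and correct) elaboration of a step the paper leaves implicit.
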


The following corollary is an answer to Question \ref{6ques}.
\begin{cor}
Let $R$ be a 2-dimensional complete Gorenstein normal local domain with algebraically closed residue field $k$ of characteristic $0$.
Then the following are equivalent:
\begin{enumerate}[$(1)$]
\item
$R$ has a simple singularity.
\item
There are only finitely many dense resolving subcategories of $\mod R$.
\item
There are only finitely many dense resolving subcategories of $\mod R$ containing $k$.
\end{enumerate}
\end{cor}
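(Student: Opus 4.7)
The plan is to reduce each of the three conditions, via the categorical classifications already established, to a finiteness statement about an abelian group (either $\K_0(\mod R)/\langle[R]\rangle$ or $\cl(R)$), and then to invoke the classical geometric classification of two-dimensional normal singularities with finite divisor class group.

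The easy implications come first. The implication $(2)\Rightarrow(3)$ is immediate, since every dense resolving subcategory of $\mod R$ containing $k$ is in particular a dense resolving subcategory. For $(1)\Rightarrow(2)$, I would read off from the table preceding the corollary that in each simple-singularity type the group $\K_0(\mod R)/\langle[R]\rangle$ is finite; Theorem \ref{6main} combined with Remark \ref{fgrp} then shows that $\mod R$ has only finitely many dense resolving subcategories.

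The substantive direction is $(3)\Rightarrow(1)$. First, Theorem \ref{classgr} together with Remark \ref{fgrp} rephrases $(3)$ as the assertion that $\cl(R)$ is a finite abelian group. Then I would invoke the classical theorem (due to Brieskorn, with refinements by Prill and Lipman) that a two-dimensional complete normal local domain over an algebraically closed field of characteristic zero has finite divisor class group if and only if it is a quotient singularity, i.e., $R \cong k[[u,v]]^G$ for some finite small subgroup $G \subset \mathrm{GL}(2,k)$. Finally, using that $R$ is Gorenstein together with the well-known fact that a quotient surface singularity is Gorenstein exactly when $G \subset \mathrm{SL}(2,k)$, the classification of finite subgroups of $\mathrm{SL}(2,k)$ identifies $R$ with one of the simple singularities of type $\a_n$, $\d_n$, $\e_6$, $\e_7$, or $\e_8$. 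The main obstacle is precisely this geometric input: the characterization of two-dimensional normal singularities with finite class group as quotient singularities lies outside the categorical machinery of the paper and must be imported from the surface-singularity literature; once that (and the Gorenstein classification of quotient surface singularities) is taken as a black box, the rest is a bookkeeping exercise through Theorem \ref{6main}, Theorem \ref{classgr}, and Remark \ref{fgrp}.
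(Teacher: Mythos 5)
Your handling of $(1)\Rightarrow(2)$ and $(2)\Rightarrow(3)$ agrees with the paper, and your reduction of $(3)$ to the finiteness of $\cl(R)$ via Theorem \ref{classgr} and Remark \ref{fgrp} is exactly the paper's first step. The problem is the geometric black box you then import for $(3)\Rightarrow(1)$: it is misstated, and as stated it is false. Finiteness of the divisor class group of a two-dimensional complete normal local domain over an algebraically closed field of characteristic zero characterizes \emph{rational} singularities (Mumford, Lipman \cite{Lip}), not quotient singularities. Brieskorn's theorem characterizes quotient singularities by finiteness of the \emph{local fundamental group}, which is a strictly stronger condition than finiteness of $\cl(R)$: there exist rational surface singularities --- hence singularities with finite class group --- that are not of the form $k[[u,v]]^G$ (their links can have infinite fundamental group). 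So the implication ``$\cl(R)$ finite $\Rightarrow R$ is a quotient singularity'' does not hold. Your argument is repairable, because the Gorenstein hypothesis saves it: $\cl(R)$ finite implies $R$ is rational, and a rational Gorenstein surface singularity is a du Val (ADE) singularity, i.e.\ a simple singularity. But that is a different theorem, with a different chain of citations, from the one you propose to use, so as written there is a genuine gap.

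For comparison, the paper's proof of $(3)\Rightarrow(1)$ avoids resolution-of-singularities input altogether and stays module-theoretic: from the finiteness of $\cl(R)$ it invokes \cite[Corollary 3.3]{DITV} to conclude that $\syz\cm(R)=\add G$ for a single module $G$; since $R$ is Gorenstein, $\cm(R)=\syz\cm(R)$, so $R$ has finite Cohen--Macaulay representation type, and then \cite[Theorem 8.10]{Y} yields that $R$ is a simple singularity. Both routes ultimately rest on nontrivial external theorems, but the paper's choice keeps the argument within the Grothendieck-group and representation-type framework of the rest of the article; if you prefer the geometric route, you must replace your cited equivalence by ``finite class group $\Leftrightarrow$ rational'' and then use the classification of rational Gorenstein surface singularities.
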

\begin{proof}
$(1) \Rightarrow (2)$: If $R$ has a simple singularity, then $\K_0(\mod R)/ \langle [R] \rangle$ is a finite group; see \cite[Proposition 13.10]{Y}.
Thus, Theorem \ref{6main} shows that $\mod R$ has only finitely many dense resolving subcategories.

$(2) \Rightarrow (3)$: This implication is trivial.

$(3) \Rightarrow (1)$: From Theorem \ref{classgr}, $\cl(R)$ has finitely many subgroups. Therefore, $\cl(R)$ is a finite group, and thus by \cite[Corollary 3.3]{DITV} we have $\syz \cm(R) = \add G$ for some module $G$, where $\syz \cm(R)$ stands for the category of first syzygies of maximal Cohen-Macaulay $R$-modules.
%$R$ has a rational singularity from \cite[Proposition 17.3]{Lip}.
%Then Iyama and Wemyss \cite[Theorem 3.6]{IW0} showed that $\syz \cm(R)$ has only finitely many indecomposable objects up to isomorphism, where $\syz \cm(R)$ denotes the subcategory of $\cm(R)$ consisting of $X \in \cm(R)$ such that there exists a short exact sequence $X \mon F \epi X'$ with $X' \in \cm(R)$ and $F$ free. 
Now, since $R$ is Gorenstein, $\cm(R) = \syz \cm(R) $ has only finitely many indecomposable objects up to isomorphism.
Consequently, $R$ has a simple singularity from \cite[Theorem 8.10]{Y}.
\end{proof}

\begin{ex}
Let $R$ be a 2-dimensional simple singularity of type $(\a_1)$. Namely, $R= k[[x, y, z]]/(x^2 + y^2 + z^2)$. Then the indecomposable maximal Cohen-Macaulay $R$-modules are $R$ and the ideal $I=(x+ \sqrt{-1}y, z)$ up to isomorphism. 
Then the dense resolving subcategories of $\mod R$ are:
\begin{itemize}
\item
$\mod R$, and
\item
$\{M \in \mod R \mid \syz^2 M \cong R^{\oplus n} \oplus I^{\oplus 2m} \mbox{ for some } m, n \in \Z_{\ge 0} \}$.
\end{itemize}
\end{ex}

\begin{proof}
By Corollary \ref{IGcor}, there is a one-to-one correspondence between the set of dense resolving subcategories of $\mod R$ and the set of dense resolving subcategories of $\cm(R)$.
This correspondence assigns a dense resolving subcategory $\X$ of $\cm(R)$ to a dense resolving subcategory $\{M \in \mod R \mid \syz^2 M \in \X \}$ of $\mod R$.
Therefore, we have only to check that the dense resolving subcategories of $\cm(R)$ are $\cm(R)$ and $\{X \in \cm(R) \mid X \cong R^{\oplus n} \oplus I^{\oplus 2m} \mbox{ for some } m, n \in \Z_{\ge 0} \}$.
Note that $\Ext_R^1(I, I) \cong \Hom_R(\underline{\Hom}_R(I, I), E_R(k)) \cong k$ by Auslander-Reiten duality, see \cite[Lemma 3.10]{Y}.
Therefore, every non-split short exact sequence starting and ending at $I$ is isomorphic to the short exact sequence $I \mon R^2 \epi I$, see \cite[Chapter 9]{Y}.

The following claim gives us the complete information on extensions in $\cm(R)$.
\begin{claim*}
Let $\sigma: I^{\oplus n} \mon W \epi I^{\oplus m}$ be a short exact sequence in $\cm(R)$.
Then $W \cong R^{\oplus 2i} \oplus I^{\oplus n+m-2i}$ for some $0 \le i \le \min\{n, m\}$.
\end{claim*}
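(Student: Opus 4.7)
The plan is to identify $\sigma$ with its extension class in $\Ext^1_R(I^{\oplus m},I^{\oplus n})$, reduce that class to rank-normal form via automorphisms of $I^{\oplus n}$ and $I^{\oplus m}$, and then read off $W$ from the resulting direct sum decomposition.

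Concretely, I identify $\Ext^1_R(I^{\oplus m},I^{\oplus n})\cong \Ext^1_R(I,I)^{\oplus nm}\cong k^{nm}$ with the space $\mathrm{Mat}_{n\times m}(k)$ of $n\times m$ matrices over $k$, so that $\sigma$ corresponds to some $M\in\mathrm{Mat}_{n\times m}(k)$. The isomorphism class of the middle term $W$ depends only on the orbit of $M$ under the natural $\mathrm{Aut}(I^{\oplus n})\times\mathrm{Aut}(I^{\oplus m})$-action, since base change at the source and target produces an isomorphic short exact sequence. Because $I$ is a rank one reflexive ideal in the normal local domain $R$, one has $\End_R(I)=R$ (any endomorphism corresponds to an element of $\mathrm{Frac}(R)$ that preserves $I$, hence is integral over $R$ and so lies in $R$), so $\mathrm{Aut}(I^{\oplus n})=\mathrm{GL}_n(R)$ and similarly for $I^{\oplus m}$. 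Moreover, the $R$-module $\Ext^1_R(I,I)\cong k$ is annihilated by $\m$, so the $\mathrm{GL}_n(R)\times\mathrm{GL}_m(R)$-action on $\mathrm{Mat}_{n\times m}(k)$ factors through $\mathrm{GL}_n(k)\times\mathrm{GL}_m(k)$ acting by left and right multiplication. Using elementary and diagonal operations in $\mathrm{GL}_n(R)\times\mathrm{GL}_m(R)$, which surject onto their images in $\mathrm{GL}_n(k)\times\mathrm{GL}_m(k)$, I can reduce $M$ to the normal form $\bigl(\begin{smallmatrix} I_r & 0\\ 0 & 0\end{smallmatrix}\bigr)$ with $r=\rk M\leq\min\{n,m\}$.

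In this normal form, $\sigma$ decomposes as a direct sum of $r$ copies of the basic non-split extension $I\mon R^{\oplus 2}\epi I$ together with the split sequence $I^{\oplus n-r}\mon I^{\oplus n-r}\oplus I^{\oplus m-r}\epi I^{\oplus m-r}$, and hence $W\cong R^{\oplus 2r}\oplus I^{\oplus n+m-2r}$, which is the desired form with $i=r$. The step that I expect to require the most care is the identification $\End_R(I)=R$ together with the verification that the row and column operations arising from $\mathrm{GL}_n(R)\times\mathrm{GL}_m(R)$ genuinely suffice to bring $M$ to rank-normal form. The former follows from normality of $R$ applied to the inclusion $\End_R(I)\subseteq\mathrm{Frac}(R)$, while the latter reduces to surjectivity of $R\twoheadrightarrow k$ at the level of elementary matrices and diagonal units; once these two points are in place, the assembly of the direct sum decomposition is routine.
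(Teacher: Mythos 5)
Your proof is correct, but it takes a genuinely different route from the paper. The paper argues by induction on $N=\min\{n,m\}$: it picks a canonical inclusion $I\mon I^{\oplus m}$ along which the pullback of $\sigma$ is non-split, identifies the resulting pulled-back extension with $I^{\oplus n}\mon R^{\oplus 2}\oplus I^{\oplus n-1}\epi I$, and then uses the vanishing $\Ext_R^1(I,R)=0$ to split the $R^{\oplus 2}$ off of the remaining sequence $W'\mon W\epi I^{\oplus m-1}$ before invoking the induction hypothesis. You instead exploit the bimodule structure on $\Ext_R^1(I^{\oplus m},I^{\oplus n})\cong \mathrm{Mat}_{n\times m}(k)$ over $\End_R(I^{\oplus n})\times\End_R(I^{\oplus m})$ and put the class of $\sigma$ into rank-normal form in one step; the middle term is then read off from the block-diagonal class as a direct sum of $r=\rk M$ copies of the basic non-split extension $I\mon R^{\oplus 2}\epi I$ plus a split part. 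Your argument is non-inductive, makes the invariant $i$ conceptually transparent (it is the rank of the extension class modulo $\m$), and does not need $\Ext_R^1(I,R)=0$ within the claim itself. The price is the extra input you correctly identify and justify: $\End_R(I)=R$ (via normality and the determinant trick), the fact that the $\mathrm{GL}_n(R)\times\mathrm{GL}_m(R)$-action factors through $\mathrm{GL}_n(k)\times\mathrm{GL}_m(k)$ because $\Ext_R^1(I,I)\cong k$ is killed by $\m$, and the surjectivity of $\mathrm{GL}_n(R)\to\mathrm{GL}_n(k)$ (which holds since $R$ is local). Both proofs ultimately rest on the same two facts established just before the claim, namely $\Ext_R^1(I,I)\cong k$ and the identification of the unique non-split self-extension of $I$ with $I\mon R^{\oplus 2}\epi I$.
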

\begin{proof}[Proof of Claim]
If $\sigma$ splits, then this statement is clear.
Therefore we may assume that $\sigma \neq 0$ (i.e., $\sigma$ does not split).
We show the statement by induction on $N:=\min\{n, m\}$.

First consider the case where $N=1$.
We only show the claim when $n=1$ because the proof for $m=1$ is similar.
Since $0 \neq \sigma \in \Ext_R^1(I^{\oplus m}, I) \cong \Ext_R^1(I, I)^{\oplus m}$, there is a canonical inclusion $I \mon I^{\oplus m}$ such that the pullback of $\sigma$ by this inclusion is not zero. 
To be precise, there is a commutative diagram 
$$
\xymatrix@M=8pt{ 
& I^{\oplus m-1} \ar@{=}[r] & I^{\oplus m-1} \\
\sigma: I \ar@{>->}[r] & W \ar@{->>}[r] \ar@{->>}[u] \ar@{}[rd]|{\text{PB}} & I^{\oplus m} \ar@{->>}[u] \\
\sigma': I \ar@{>->}[r] \ar@{=}@<-1.5ex>[u] & W' \ar@{->>}[r] \ar@{>->}[u] & I \ar@{>->}[u]
}
$$ 
such that each of the rows and columns is a short exact sequence in $\cm(R)$, and moreover $\sigma'$ does not split.
Therefore, $\sigma':I \mon W' \epi I$ is isomorphic to $I \mon R^{\oplus 2} \epi I$.
In particular, $W'$ is isomorphic to $R^{\oplus 2}$. 
On the other hand, since $\Ext_R^1(I, R)=0$, the short exact sequence $W' \mon W \epi I^{\oplus m-1}$ is a split short exact sequence.
Thus, $W \cong R^{\oplus 2} \oplus I^{\oplus m-1}$.

Next, consider the case where $N > 1$.
Suppose $N=m$. 
The case $N=n$ is similarly handled.
Since $0 \neq \sigma \in \Ext_R^1(I^{\oplus m}, I^{\oplus n}) \cong \Ext_R^1(I, I^{\oplus n})^{\oplus m}$, there is a canonical inclusion $I \mon I^{\oplus m}$ such that the pullback of $\sigma$ by this inclusion is not zero. 
Therefore, there is a commutative diagram
$$
\xymatrix@M=8pt{ 
& I^{\oplus m-1} \ar@{=}[r] & I^{\oplus m-1} \\
\sigma: I^{\oplus n} \ar@{>->}[r] & W \ar@{->>}[r] \ar@{->>}[u] \ar@{}[rd]|{\text{PB}} & I^{\oplus m} \ar@{->>}[u] \\
\sigma': I^{\oplus n} \ar@{>->}[r] \ar@{=}@<-1.5ex>[u] & W' \ar@{->>}[r] \ar@{>->}[u] & I \ar@{>->}[u]
}
$$ 
such that each of the rows and columns is a short exact sequence in $\cm(R)$, and furthermore $\sigma'$ does not split.
By using the same argument as above, $W'$ is isomorphic to $R^{\oplus 2} \oplus I^{\oplus n-1}$ because $\sigma'$ does not split.
Since $\Ext_R^1(I, R)=0$, we obtain the following isomorphism of short exact sequences
$$
(W' \mon W \epi I^{\oplus m-1}) \cong (R^{\oplus 2} \mon R^{\oplus 2} \epi 0) \oplus (I^{\oplus n-1} \mon W'' \epi I^{\oplus m-1}).
$$
From the induction hypothesis, $W''$ is isomorphic to $ R^{\oplus 2i} \oplus I^{\oplus n+m-2i-2}$ for some $0 \le i \le \min\{n-1, m-1\}= N-1$.
Thus, $W \cong R^{2i+2} \oplus I^{\oplus n+m-2i-2}$.
Now we are done.
\renewcommand{\qedsymbol}{$\square$}
\end{proof}

From the claim, $\X:= \{X \in \cm(R) \mid X \cong R^{\oplus n} \oplus I^{\oplus 2m} \mbox{ for some } m, n \in \Z_{\ge 0} \}$ forms a dense resolving subcategory of $\cm(R)$.
Indeed, $\X$ is dense because every maximal Cohen-Macaulay $R$-module is isomorphic to a finite direct sum of indecomposable ones, and closed under extensions by the claim.
To show that $\X$ is closed under kernels of admissible epimorphisms, consider a short exact sequence: 
$$
R^{\oplus n_1} \oplus I^{\oplus m_1} \mon R^{\oplus n_2} \oplus I^{\oplus 2 m_2} \epi R^{\oplus n_3} \oplus I^{\oplus 2 m_3}.
$$
Since $\Ext_R^1(I, R)=0$, this short exact sequence is isomorphic to 
$$
(I^{\oplus m_1} \mon R^{\oplus r} \oplus I^{\oplus 2 m_2} \epi I^{\oplus 2 m_3}) \oplus (R^{\oplus n_1} \mon R^{\oplus n_1 + n_3} \epi R^{\oplus n_3}).
$$
Then, by the claim, $R^{\oplus r} \oplus I^{\oplus 2 m_2}$ must be isomorphic to
$$
R^{\oplus 2i} \oplus I^{\oplus m_1+2m_3-2i}
$$
for some integer $i$.
Therefore, we obtain an equality $2 m_2 = m_1+2m_3-2i$ as $\cm(R)$ is a Krull-Schmidt category, see \cite[Proposition 1.18]{Y}.
Thus, $R^{\oplus n_1} \oplus I^{\oplus m_1} =R^{\oplus n_1} \oplus I^{\oplus 2(m_2 - m_3 -i)} \in \X$.
This shows that $\X$ is closed under kernels of admissible epimorphisms. 
Consequently, $\X$ is a dense resolving subcategory of $\cm(R)$.

As we have already discussed before, simple singularity of type $(\a_1)$ has only two dense resolving subcategory; $\K_0(\mod R)/\lan [R] \ran \cong \Z/2\Z$ has only two subgroups. 
Hence $\X \neq \cm(R)$ is a unique non-trivial dense resolving subcategory of $\cm(R)$.
\end{proof}

\begin{ac}
The author is grateful to his supervisor Ryo Takahashi for his many helpful comments.  
\end{ac}

\end{document}